\documentclass{article}
\usepackage[T1]{fontenc}
\usepackage{graphicx}
\usepackage[utf8]{inputenc}
\usepackage{booktabs}
\usepackage[ams,todos]{CMImacros}
\usepackage{amsmath}
\usepackage{amsfonts}
\usepackage{amssymb}
\usepackage{float}
\usepackage{array}
\usepackage{booktabs}
\usepackage{amsthm}
\usepackage{enumerate}
\usepackage{cleveref}
\usepackage{orcidlink}
\usepackage{hyperref}
\hypersetup{
    breaklinks=true,
    colorlinks=true,     % <-- use colored text instead of boxes
    linkcolor=blue,      % <-- color for internal links (sections, TOC)
    citecolor=blue,      % <-- color for citations
    urlcolor=blue
}

\usepackage{titlesec}

\titleformat{\subsubsection}
  {\normalfont\scshape}  % small caps, no bold/italic/underline
  {}                     % no label/numbering
  {1em}                  % horizontal space before heading text
  {}                     % no prefix code
  [\vspace{0.5em}]       % space after heading to break the line

\titlespacing*{\subsubsection}
  {0pt}{1.5em}{0.5em}    % {left}{before}{after} spacing

\usepackage[format=plain,justification=justified]{caption}
\usepackage[format=plain,justification=justified]{caption}

\usepackage[backend=bibtex,maxbibnames=99,firstinits=true,style=numeric-comp,url=false,sorting=none]{biblatex}

\DeclareFieldFormat[article]{title}{#1} % print title literally
\DeclareFieldFormat{title}{#1}
\DeclareFieldFormat{journaltitle}{#1}
\DeclareFieldFormat{booktitle}{#1}
\DeclareFieldFormat{pages}{#1}
\DeclareFieldFormat[inproceedings]{title}{#1}

\DeclareNameAlias{author}{given-family}
\renewcommand*{\labelnamepunct}{\addcolon\space}

\renewbibmacro{in:}{}

\AtEveryBibitem{%
  \clearfield{month}%
  \clearfield{isbn}%
  \clearfield{issn}%
  \clearfield{address}%
  \clearfield{note}%
  \clearfield{language}%
  \ifentrytype{article}{}{%
    \clearfield{publisher}%
  }%
}

\renewbibmacro*{title}{%
  \ifentrytype{article}{%
    \printfield{title}%
  }{%
    \printfield[title]{title}%
  }%
}

\renewbibmacro*{title}{%
  \ifentrytype{thesis}{%
    \printfield{title}%
  }{%
    \printfield[title]{title}%
  }%
}

\DeclareFieldFormat{doi}{%
  \adddot\space
  \url{https://doi.org/#1}%
}

\DeclareBibliographyDriver{article}{%
  \usebibmacro{bibindex}%
  \usebibmacro{begentry}%
  \printnames{author}%
  \setunit{\labelnamepunct}\space
  \usebibmacro{title}%
  \adddot\space
  \printfield{journaltitle}%
  \setunit{\space}%
  \mkbibbold{\printfield{volume}}%
  \setunit{\addcomma\space}%
  \printfield{pages}%
  \setunit{\space}%
  \printtext[parens]{\printfield{year}}%
  \newunit\newblock
  \iffieldundef{doi}{}{%
    \printfield{doi}% <--- MODIFIED
  }%
  \usebibmacro{finentry}%
}

\DeclareBibliographyDriver{book}{%
  \usebibmacro{bibindex}%
  \usebibmacro{begentry}%
  \printnames{author}%
  \setunit{\labelnamepunct}\space
  \printfield{title}%
  \adddot\space
  \printlist{publisher}%
  \setunit{\addcomma\space}%
  \printlist{location}%
  \setunit{\space}%
  \printtext[parens]{\printfield{year}}%
  \usebibmacro{finentry}%
}

\DeclareBibliographyDriver{incollection}{%
  \usebibmacro{bibindex}%
  \usebibmacro{begentry}%
  \printnames{author}%
  \setunit{\labelnamepunct}\space
  \printfield{title}%
  \adddot\space
  \printtext{In: }%
  \printnames{editor}%
  \setunit{\addcomma\space}%
  \printtext{(eds.) }%
  \printfield{booktitle}%
  \setunit{\addcomma\space}%
  \printfield{pages}%
  \adddot\space
  \printlist{publisher}%
  \setunit{\addcomma\space}%
  \printlist{location}%
  \setunit{\space}%
  \printtext[parens]{\printfield{year}}%
  \usebibmacro{finentry}%
}

\DeclareBibliographyDriver{misc}{%
  \usebibmacro{bibindex}%
  \usebibmacro{begentry}%
  \printnames{author}%
  \setunit{\labelnamepunct}\space
  \printfield{title}%
  \adddot\space
  \iffieldundef{journal}{%
    \iffieldundef{eprint}{}{%
      \printtext{arXiv:\space\printfield{eprint}}%
    }%
  }{%
    \printfield{journal}%
  }%
  \setunit{\space}%
  \printtext[parens]{\printfield{year}}%
  \usebibmacro{finentry}%
}

\DeclareBibliographyDriver{inproceedings}{%
  \usebibmacro{bibindex}%
  \usebibmacro{begentry}%
  \printnames{author}%
  \setunit{\labelnamepunct}\space% <--- Uses the new definition (now just a space)
  \printfield{title}%
  \adddot
  \setunit{\addspace}\space
  \printtext{In: }%
  \printfield{booktitle}%
  \setunit{\addcomma\space}%
  \printfield{pages}%
  \setunit{\adddot\space} % Use a period before the publisher/year block
  \printlist{publisher}%
  \setunit{\space}%
  \printtext[parens]{\printfield{year}}%
  \usebibmacro{finentry}%
}

\DeclareFieldFormat[phdthesis]{title}{#1}
\DeclareFieldFormat[thesis]{title}{#1}
\DeclareBibliographyAlias{thesis}{phdthesis}

\DeclareBibliographyDriver{phdthesis}{%
  \usebibmacro{bibindex}%
  \usebibmacro{begentry}%
  \printnames{author}%
  \setunit{\labelnamepunct}\space
  \printfield[titlecase]{title}%
  \adddot\space
  \printfield{type}%
  \space
  \printfield{school}%
  \printtext[parens]{\printfield{year}}%
  \usebibmacro{finentry}%
}

\usepackage{authblk}
\usepackage[margin=1.2in]{geometry} 

\newtheorem{theorem}{Theorem}[section]
\newtheorem{lemma}{Lemma}[section]

\theoremstyle{definition}

\theoremstyle{example}

\theoremstyle{corollary}
\newtheorem{corollary}{Corollary}[section]

\theoremstyle{theorem}
\newtheorem{remark}{Remark}[section]

\numberwithin{equation}{section}

\bibliography{string,robo}%

\title{Convergence theory for Hermite approximations under adaptive coordinate transformations}%
\author[]{Yahya Saleh\thanks{Corresponding Author: yahya.saleh@mathmods.eu}\orcidlink{0000-0002-3235-217X}}
\date{}
\begin{document}
\maketitle

\begin{abstract}\noindent%
Recent work has shown that 
parameterizing and optimizing coordinate
transformations using normalizing flows, \ie, invertible neural networks, can
significantly accelerate the convergence of spectral approximations. We present the first error estimates for approximating functions using
Hermite expansions composed with adaptive coordinate transformations. Our
analysis establishes an equivalence principle: approximating a function $f$ in the
span of the transformed basis is equivalent to approximating the pullback of $f$
in the span of Hermite functions. This allows us to leverage the classical
approximation theory of Hermite expansions to derive error estimates in transformed
coordinates in terms of the regularity of the pullback.
We present an example demonstrating how a nonlinear coordinate transformation
can enhance the convergence of Hermite expansions. Focusing on smooth functions
decaying along the real axis, we construct a monotone transport
map that aligns the decay of the target function with the Hermite basis. This
guarantees spectral convergence rates for the corresponding Hermite expansion. Our analysis provides theoretical insight into the convergence
behavior of adaptive Hermite approximations based on normalizing flows, as
recently explored in the computational quantum physics literature.

\end{abstract}

\noindent \textbf{Keywords:} Hermite function. Normalizing flow. 

\noindent \textbf{Mathematics Subject Classification:} Primary 	65T99, 41A25.
\tableofcontents

\section{Introduction}

Hermite functions $(h_n)_{n=0}^{\infty}$ form an orthonormal basis for  $L^2$  on the real line, making
them a natural choice for spectral methods on unbounded domains. They are eigenfunctions of the quantum
harmonic oscillator~\cite{Lubich:QCMD} and are routinely used for numerical simulations
in domains varying from quantum molecular physics~\cite{Yurchenko:CSPM2023} to oceanography~\cite{Boyd:DEO2018}.

Recently, several works have incorporated Hermite expansions in conjunction with adaptive
coordinate transformations
 into spectral discretizations of differential equations
arising in molecular quantum physics~\cite{Saleh:JCTC21:5221,Vogt:JCP163:15,
Zhang:JCP161:024103, Li:PRL136:076504} and condensed-matter physics~\cite{Zhang:PRL134:24,
Xie:arXiv2105.08644}.
In particular, for approximating an $L^2$ function $f$, the cited studies replaced the standard ansatz of Hermite
expansions
\begin{equation}
  \label{eq:standard_ansatz}
f \approx \sum_{n=0}^{N-1} \langle f, h_n \rangle_{\mathbb{R}} \ h_n  
\end{equation}
with approximations in the linear span of Hermite functions expressed in nonlinear coordinates parameterized by a transformation \(g\)
\begin{equation}
  \label{eq:flows_ansatz}
  f \approx \sum_{n=0}^{N-1}
  \langle f, h_n\!\circ g \,\sqrt{g'} \rangle_{\Omega}\,
  h_n\!\circ g \,\sqrt{g'}
  \quad \text{or} \quad
  f \approx \sum_{n=0}^{N-1}
  \langle f, h_n\!\circ g \, g' \rangle_{\Omega}\,
  h_n\!\circ g ,
\end{equation}
 where the coordinate transformation
\(g:\Omega \to \mathbb{R}\) is not fixed \emph{a priori} but determined adaptively.

In these approaches, \(g\) was parameterized by an invertible neural network,
referred to as a normalizing flow~\cite{Kobyzev:PAMI43:3964}.
A normalizing flow is a differentiable bijection \(g_\theta\)
with efficiently computable inverse and Jacobian determinant, originally
developed for density estimation in generative machine learning.
In \textcite{Saleh:JCTC21:5221,Vogt:JCP163:15} the parameters \(\theta\) were
optimized by minimizing the Rayleigh quotient of the approximated eigenfunctions
for a specific truncation parameter $N$,
thereby producing  spectral bases with substantially improved
convergence that can be employed with larger $N$.

To illustrate this line of research, consider the task of computing the lowest 23 eigenpairs \((E_m, \Psi_m)_{m=0}^{22}\) of the time-independent Schrödinger equation for the Morse oscillator~\cite{Vogt:JCP163:15}
\begin{equation}
\label{eq:Morse}
  H = -\frac{\hbar^2}{\mu} \frac{\partial^2}{\partial x^2} + D_e \left[1 - \exp(-a_M x)\right]^2,
\end{equation}
where \(\mu\) is the reduced mass, \(D_e\) the dissociation energy, \(a_M\) a
Morse parameter, and \(x = r - r_e\) the displacement from the equilibrium bond
length. An approximation of the form \eqref{eq:flows_ansatz} 
can be used to compute the eigenpairs.

\autoref{fig:1} shows the accuracy of the computed eigenvalues as a function of
the truncation parameter \(N\) for the standard Hermite expansion (ansatz \eqref{eq:standard_ansatz}), adaptive
Hermite expansions (ansatz \eqref{eq:flows_ansatz}) employing a linear
coordinate transformation $g$, and an adaptive Hermite expansion (ansatz
\eqref{eq:flows_ansatz}) employing a nonlinear transformation $g$ parameterized
by a normalizing flow. All coordinate transformations were optimized to minimize
the Rayleigh quotient of the approximated eigenfunctions for $N=23$,
 see \autoref{subsection:Morse} for details. The
results demonstrate that employing a coordinate
transformation substantially improves the accuracy of the computed eigenvalues,
and that the improvement is more pronounced for the nonlinear transformation parameterized by a normalizing flow.

Beyond this illustrative example, the methodology has demonstrated strong
performance in challenging applications, enabling accurate computation of
vibrational spectra for floppy molecules like hydrogen cyanide/hydrogen isocyanide HCN/HNC isomers~\cite{Saleh:JCTC21:5221,
Zhang:JCP161:024103} and examining phase stability in crystalline lithium~\cite{Zhang:PRL134:24}.
\begin{figure}[h]
	  \centering
  \includegraphics[width=0.62\textwidth]{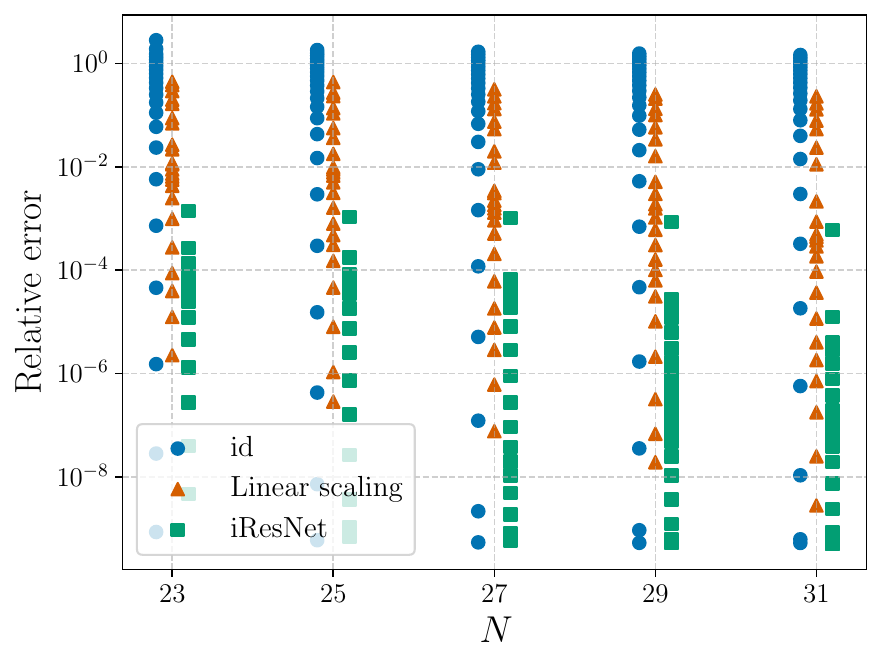}
  \caption{Shown is the relative error $\frac{|\tilde{E}_m - E_m^\text{ref}|}{|E_m^\text{ref}|}$ in the computation of 23
  eigenvalues of the Morse oscillator \eqref{eq:Morse} using Hermite approximations (blue
  circles), Hermite approximations employing a linear coordinate transformation $g$ (red triangles), and Hermite approximations employing a nonlinear coordinate transformation $g$ parameterized by a normalizing flow (green squares) as a
  function of the truncation parameter $N$.}
  \label{fig:1}
\end{figure}

The present work establishes the first rigorous convergence theory for
approximations of the form \eqref{eq:flows_ansatz}.
In particular, we
establish an equivalence principle demonstrating that approximating any $f\in
L^2(\Omega)$ in the linear span of $(h_n \circ g \sqrt{|g'|})_{n=0}^\infty$ is equivalent to
approximating $f \circ g^{-1} \sqrt{|(g^{-1})'|}$ in the linear span of Hermite functions
$(h_n)_{n=0}^\infty$, see \autoref{thm:transfer}. This principle
allows us to leverage the extensive literature on the convergence of Hermite
expansions to derive error estimates for approximations of the form
\eqref{eq:flows_ansatz} in terms of the regularity of $W_{g^{-1}}f$.

Since solutions of various molecular Schrödinger equations are typically smooth
functions that decay along the real axis, we focus on this class of
functions. We show that for any smooth function exhibiting such decay,
there exists a coordinate transformation \(g\) such that the approximation
\eqref{eq:flows_ansatz} achieves spectral convergence rates, see
\autoref{thm:adapt_to_decay} and \autoref{cor:adapt_to_decay}. These transformations are constructed as monotone
transport maps that adapt the decay of the target function at infinity to match
the decay of Hermite functions.

In \autoref{sec:num_exp} we
provide numerical experiments, where optimal coordinate transformations are
learned using a fixed number of basis functions, an approach that recently emerged in computational quantum physics literature~\cite{Saleh:JCTC21:5221,Vogt:JCP163:15}.
We demonstrate that the normalizing flow in
\textcite{Vogt:JCP163:15}, optimized to minimize vibrational energies,
effectively learns a coordinate transformation, such that the pullback of the target eigenfunction exhibits increased decay at infinity, thereby accelerating the convergence of the approximation.

A survey of application of Hermite expansions in conjunction with coordinate
transformations is in order.
Employing coordinate transformations in spectral methods is a well-established
technique in numerical analysis~\cite{Boyd:SM2000, Canuto:SpectralMethods,
Guo:SM:1998, Shen:SM:2024}. This technique may be beneficial for a variety of
reasons, such as to map the computational domain to the domain of the basis
functions~\cite{Shen:CCP5:195}, or to resolve singularities and internal boundary
layers~\cite{Boyd:SM2000}. In conjunction with Hermite expansions, the most
common choice of coordinate transformation is a scaling
factor~\cite{Tang:SiamSC14:594, Boyd:MC35:152,Boyd:JCP54:382}. Various rationale
have been proposed for the choice of optimal scaling factors. For example,
\textcite{Tang:SiamSC14:594} proposed to choose the scaling factor as to balance
the maximum root of the Hermite function of order $N-1$ and the length of the
finite support of the target function. 
Recent studies demonstrated that
optimal scaling factor should balance the spatial and frequency truncation
errors~\cite{Hu:SiamJNA64:125, Jaming:arXiv1407:1293}.
In application to time-dependent problems, the scaling factor can be chosen
adaptively in time to account for the changing spatial extent of the solution~\cite{Xia:SiamJSC43:3244}. 

Despite the widespread use of coordinate transformations in Hermite-based
spectral methods, most studies have focused on the analysis of scaling factors,
while studies of more general nonlinear coordinate transformations considered
only \emph{a priori} defined maps~\cite{Shen:CCP5:195}. A convergence theory
for Hermite approximations employing adaptive coordinate transformations, such as those parameterized
by normalizing flows, has been lacking. The present work aims to fill this gap.
\section{Background}
\label{sec:preliminaries} 
 Hermite functions
$(h_n)_{n=0}^\infty$ are defined by
$$	  h_n(x) := \frac{1}{\sqrt{2^n n! \sqrt{\pi}}} H_n(x) \exp({-x^2/2}),$$
	where $H_n$ is the $n$-th Hermite polynomial. These functions arise naturally as the eigenfunctions of the quantum harmonic oscillator, whose Hamiltonian is given by
\begin{equation}
\label{Eq:HO}
H_{\text{HO}} = -\frac{1}{2} \frac{d^2}{dx^2} + \frac{1}{2} x^2.
\end{equation}
  
  Like any basis, Hermite functions can be employed in conjunction with a differentiable change of
coordinates $g: \Omega \to \mathbb{R}$, where $\Omega \subseteq \mathbb{R}$
is an open unbounded set denoting the computational domain. 
  
  When defining a differentiable change of coordinates,
  one needs to make sure that the transformed basis functions
  \begin{equation}
    \label{eq:induced_Riesz}
    (\phi_n)_{n=0}^\infty := (C_g h_n )_{n=0}^\infty \qquad \text{where} \qquad C_g f = f \circ g,
    \end{equation}
     form a complete basis in $L^2(\Omega)$.
          A first requirement is that
     the transformed functions lie in $L^2(\Omega)$, which can be guaranteed if 
          $g'$ is bounded almost everywhere. This condition ensures that the
          operator $C_g$ maps $L^2(\mathbb{R})$ continuously into $L^2(\Omega)$~\cite{Takagi:FS232:321}.   
    Ensuring completeness of the transformed basis is crucial for guaranteeing the convergence of approximations constructed from a finite subset of basis functions~\cite{Saleh:PAMM23:e202200239}.
    Recently, necessary and sufficient conditions to this end were characterized
    and expansion coefficients were derived~\cite{ Saleh:CAOT20:21}. We
    summarize these results in the following theorem.
    \begin{theorem}
      \label{thm:riesz}
      Let $g: \Omega \to \mathbb{R}$ be a differentiable map and assume that $g'$
      is bounded almost everywhere. The sequence
      $(\phi_n)_{n=0}^\infty$ defined in \eqref{eq:induced_Riesz} is a Riesz
      basis of
      $L^2(\Omega)$ if and only if $g$ is injective and $g'$ is bounded away from zero almost everywhere. In this case, any
      function $f \in L^2(\Omega)$ can be represented by the unique series
      expansion
      \begin{equation}
        \label{eq:riesz_expansion}
        f = \sum_{n=0}^\infty \langle f, \tilde{\phi}_n \rangle_{\Omega} \ \phi_n,
      \end{equation}
      where $\langle \cdot, \cdot \rangle_{\Omega}$ denotes the inner product in
      $L^2(\Omega)$ and $(\tilde{\phi}_n)_{n=0}^\infty:= (\phi_n \ g')_{n=0}^\infty$ is the dual basis of $(\phi_n)_{n=0}^\infty$.
    \end{theorem}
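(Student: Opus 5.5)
The plan is to factor the map $h\mapsto h\circ g$ through a unitary weighted composition operator, and to reduce the Riesz-basis property to boundedness and invertibility of a multiplication operator. Throughout, I read the conclusion ``Riesz basis of $L^2(\Omega)$'' as including completeness, so that $g$ must map $\Omega$ onto $\mathbb{R}$ up to a null set; the necessity argument below produces this surjectivity.

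\textbf{Sufficiency.} Assume $g$ is injective and $0<c\le |g'|\le C$ almost everywhere. By continuity and injectivity, $g$ is strictly monotone on each connected component of $\Omega$. Define the weighted composition operator
\[
W_g f := (f\circ g)\sqrt{|g'|}.
\]
By the change of variables $y=g(x)$,
\[
\|W_g f\|_{L^2(\Omega)}^2=\int_\Omega |f(g(x))|^2|g'(x)|\,dx=\int_{g(\Omega)}|f|^2 ,
\]
so $W_g$ is an isometry from $L^2(g(\Omega))$ onto $L^2(\Omega)$. Its inverse is $W_{g^{-1}}$, defined in the same way with $g^{-1}$. Assuming $g(\Omega)=\mathbb{R}$ up to a null set, $W_g$ is unitary from $L^2(\mathbb{R})$ to $L^2(\Omega)$. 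Hence $(W_g h_n)$ is an orthonormal basis of $L^2(\Omega)$. Next write
\[
\phi_n = C_g h_n = M\,W_g h_n, \qquad M=\text{multiplication by } |g'|^{-1/2}.
\]
The bounds on $g'$ make $M$ bounded and boundedly invertible. The image of an orthonormal basis under a bounded invertible operator is a Riesz basis, so $(\phi_n)$ is a Riesz basis.

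\textbf{Dual basis and expansion.} The dual basis is $((M^{-1})^*W_g h_n)=(|g'|^{1/2}W_g h_n)=(\phi_n|g'|)$. This equals $\phi_n g'$ up to the constant sign of $g'$ on each component. A direct computation by change of variables gives
\[
\langle \phi_m,\phi_n g'\rangle_\Omega=\langle h_m,h_n\rangle_{\mathbb{R}}=\delta_{mn}.
\]
Together with completeness, this biorthogonality yields the expansion \eqref{eq:riesz_expansion} and its uniqueness.

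\textbf{Necessity.} Assume $(\phi_n)$ is a Riesz basis, so the map $T=C_g:L^2(\mathbb{R})\to L^2(\Omega)$ is bounded and invertible; boundedness already follows from $g'\in L^\infty$. I would argue in three steps.

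\emph{Injectivity.} If $g$ is not injective, there are disjoint intervals $I_1,I_2\subset\Omega$ with $g(I_1)=g(I_2)=J$ of positive length, by continuity and the intermediate value theorem. Take $u$ supported in $I_1$ and, loosely, $u=v\circ g$ there; then $u$ can be expanded only through values of $v$ on $J$. But the same $v$ forces values on $I_2$, so $u$ is not in the range of $T$. This contradicts surjectivity.

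\emph{Lower bound on $g'$.} The invertibility estimate $\|Tf\|\ge c\|f\|$ reads
\[
\int_{g(\Omega)}|f|^2\,|(g^{-1})'|\,dy \;\ge\; c^2\int_{\mathbb{R}}|f|^2 .
\]
Localizing $f$ to small sets gives $|(g^{-1})'|\ge c^2$ almost everywhere, that is, $|g'|\le c^{-2}$. This is only an upper bound, so it alone does not suffice; I handle it together with surjectivity. The same estimate forces $|\mathbb{R}\setminus g(\Omega)|=0$, since any $f$ supported outside $g(\Omega)$ has $Tf=0$.

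\emph{The lower bound itself.} I would use surjectivity of $T$ through the closed range theorem: $T^*$ is bounded below. Computing $T^*u=(u|g'|^{-1})\circ g^{-1}$ and expressing $\|T^*u\|^2=\int |u|^2|g'|^{-1}$ would only bound $|g'|^{-1}$ from below. Instead I would use boundedness of $T^{-1}$: writing $T^{-1}u=u\circ g^{-1}$, we get
\[
\|T^{-1}u\|^2=\int_\Omega|u|^2|g'|\,dx,
\]
and this gives no lower bound on $g'$ either. The correct tool is boundedness of the canonical dual frame. The dual functions are $\phi_n g'$, and the expansion must converge for every $f$. Boundedness of the coefficient map $f\mapsto(\langle f,\phi_ng'\rangle)$ gives
\[
\sum_n|\langle f,\phi_n g'\rangle|^2=\int |f|^2|g'|\;\le\; K\|f\|^2 .
\]
This again gives only an upper bound. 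So the lower bound $|g'|\ge c$ should come from $\|Tf\|^2=\int|f|^2|(g^{-1})'|$ being bounded above, which is boundedness of $T$ applied on the image side; this gives $|(g^{-1})'|\le C$, hence $|g'|\ge 1/C$. The real content of boundedness is that this global upper bound holds on all of $L^2(\mathbb{R})$ and not merely on the range.

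\textbf{Main obstacle.} I expect necessity to be the main obstacle: making the non-injectivity contradiction rigorous, and correctly matching each operator bound (bounded versus bounded below) to the upper and lower bounds on $g'$, including where the inverse branch fails to be absolutely continuous. For the latter I would appeal to the general characterization of invertible composition operators on $L^2$~\cite{Takagi:FS232:321}.
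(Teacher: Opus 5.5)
Your sufficiency half is correct and self-contained, whereas the paper's proof is only a one-line citation of an external corollary. Factoring $C_gh_n=M\,W_gh_n$, with $W_g$ unitary and $M$ multiplication by $|g'|^{-1/2}$, is the natural argument. You are right that surjectivity of $g$ onto $\mathbb{R}$ up to a null set must be added to the statement. For example, if $\Omega=(0,\infty)$ and $c\le g'\le C$, the image is a half-line, and any nonzero $f\in L^2(\mathbb{R})$ supported off it gives a nontrivial $\ell^2$ relation $\sum_n\langle f,h_n\rangle_{\mathbb{R}}\phi_n=0$. Your dual computation correctly yields $\phi_n|g'|$. However, on a connected $\Omega$ one has $\langle\phi_m,\phi_ng'\rangle_\Omega=\operatorname{sign}(g')\,\delta_{mn}$, so your displayed biorthogonality with $\phi_n g'$ requires $g'>0$.

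The genuine gap is in necessity. The claim that boundedness of $T=C_g$ ``already follows from $g'\in L^\infty$'' is false, and it is circular. By your own formula $\|C_gf\|^2=\int_{g(\Omega)}|f|^2|(g^{-1})'|$, boundedness of $C_g$ is equivalent to $|g'|$ being bounded away from zero (compare $\|C_g\|=k^{-1/2}$ in the appendix). The hypothesis $g'\in L^\infty$ controls the other side: $\|C_gf\|\ge\|g'\|_\infty^{-1/2}\|f\|$ for bijective $g$. Concretely, $g(x)=x^3/(1+x^2)$ is an increasing bijection of $\mathbb{R}$ with $0\le g'\le 9/8$, yet $\|C_g\chi_{(-\varepsilon,\varepsilon)}\|^2\approx 2\varepsilon^{1/3}$ while $\|\chi_{(-\varepsilon,\varepsilon)}\|^2=2\varepsilon$.

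Your last paragraph lands on the right mechanism, but the step that carries all the content is missing. The Riesz property gives $\|p\circ g\|_{L^2(\Omega)}\le B\|p\|_{L^2(\mathbb{R})}$ only for finite Hermite sums $p$, while your localization needs it for bumps. One fix:
\begin{itemize}
\item For $\psi\in C_c^\infty(\mathbb{R})$ the Hermite coefficients decay rapidly and the $h_n$ are uniformly bounded. So the partial sums converge to $\psi$ uniformly on $\mathbb{R}$, hence pointwise after composing with $g$, and Fatou gives $\|\psi\circ g\|\le B\|\psi\|$.
\item Take $0\le\psi\le1$ with $\psi=1$ on an interval $E$ and supported in a slightly larger one. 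This yields $|g^{-1}(E)|\le B^2|E|$.
\item Take $E=g((x_0-h,x_0+h))$ and let $h\to0$. This gives $|g'(x_0)|\ge B^{-2}$ at every point, with no injectivity and no appeal to Takagi needed.
\end{itemize}
This argument also rules out $|\{g'=0\}|>0$, which is exactly the case where $C_g$ is not even well defined on $L^2$ classes and your change of variables with $(g^{-1})'$ breaks down. The same measure bound makes $C_g$ bounded and equal to the synthesis operator of $(\phi_n)$ on all of $L^2(\mathbb{R})$, which your injectivity and surjectivity steps silently assume. Doing this step first also excludes $g$ being constant on a subinterval. Your injectivity argument overlooks that case, since there no disjoint $I_1,I_2$ with a common image of positive length exist.
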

    \begin{proof}
      The result follows from \textcite[Corollary 4.1]{Saleh:CAOT20:21}. 
    \end{proof}
  
    While a bi-Lipschitz map $g$ ensures that the sequence
    \eqref{eq:induced_Riesz} forms a Riesz basis, maintaining the orthonormality
    of the basis functions  $(\phi_n)_{n=0}^\infty$  is only possible under the
    restrictive condition that  $g^{\prime} = 1$  almost
    everywhere~\cite{Saleh:JCTC21:5221}. However, it is possible to account
    for the volume distortion introduced by the change of coordinates and adjust accordingly. Let $g: \Omega \to \mathbb{R}$ be a $C^1$ diffeomorphism and consider the sequence 
    \begin{equation}
      \label{eq:induced_ons}
      (\chi_n)_{n=0}^\infty := (W_g h_n)_{n=0}^\infty \qquad \text{where} \quad (W_g f)(x)  := (f \circ
  g)(x) \ \sqrt{|g'(x)|}
      \end{equation}
       Here, $W_g$ is a weighted composition operator induced by $g$. The
       operator maps $L^2(\mathbb{R})$ to $L^2(\Omega)$. The sequence \eqref{eq:induced_ons} is
  an orthonormal basis in $L^2(\Omega)$, \ie, any $f$ in $L^2(\Omega)$ can be represented by the unique
  series expansion
  $$
  f = \sum_{n=0}^\infty \langle f, \chi_n \rangle_{\Omega} \ \chi_n.
  $$

  Throughout this work, we assume that bases of the form
  \eqref{eq:induced_ons} are induced by maps $g$ with  \(g' > 0\) almost
  everywhere.

  In recent literature, various normalizing-flow architectures have been
  proposed to model the coordinate transformation \(g\) such as
  invertible residual neural networks (iResNets)~\cite{Saleh:JCTC21:5221,
  Vogt:JCP163:15}, and RealNVP~\cite{
  Zhang:JCP161:024103,Zhang:PRL134:24,}. For our numerical experiments, we
  employ the iResNet architecture~\cite{Behrmann:ICML2019:573}, which is a neural network composed of
  residual blocks of the form
\begin{equation}
\label{eq:iResNet_block}
x \mapsto x + \sigma(W_2 \sigma(W_1 x + b_1) + b_2),
\end{equation}
where \(W_1, W_2\) are weight matrices, \(b_1, b_2\) are bias vectors, and
\(\sigma\) is a nonlinear activation function with a Lipschitz constant less than
one. Invertibility is guaranteed by
constraining the spectral norms of the weight matrices to be less than one. The
inverse can be computed efficiently by fixed-point iterations.

  The error estimates for the Riesz and orthonormal bases of the forms
\eqref{eq:induced_Riesz} and \eqref{eq:induced_ons}, respectively, differ by
norm equivalence constants arising from the lack of orthonormality. For
simplicity, we present the equivalence principle for the
orthonormal bases \eqref{eq:induced_ons} in \autoref{sec:convergence_iHermite}.
The equivalence principle for
the Riesz bases is deferred to \autoref{app:error_comp}.

  \section{Convergence theory of Hermite expansions}
  \label{sec:error_estimates_Hermite}

  The convergence of Hermite expansions for approximating various target functions
has been extensively studied~\cite{Hille:DMJ5:4, Hille:TAMS47:1, Boyd:SM2000, Canuto:SpectralMethods,
Guo:SM:1998, Shen:SM:2024, Lubich:QCMD}.
The convergence of Hermite expansions relies on the smoothness of the
target function and on the rate at which the function decays at
infinity. Early work of \textcite{Hille:DMJ5:4, Hille:TAMS47:1} studied Hermite
approximations for functions that are analytic in a strip around the real-axis. This
work has been extended by \textcite{Boyd:MC35:152, Boyd:JCP54:382}, who performed asymptotic
analysis of the expansion coefficients pertaining to analytic and entire
functions. In recent work, insights on the convergence of Hermite expansions for
analytic functions have been formalized in an \emph{a priori} error upper bound ~\cite{Wang:arXiv2312:07940}. A functional analytic approach to the convergence of Hermite
expansions of less regular functions is presented in \textcite{Shen:SM:2024}, where the
smoothness and decay of the target function are measured by the powers of the
Dirac ladder operator $A$. Recently, a new framework was introduced, where the
smoothness and decay of the target function are measured by spatial and
frequency truncation errors~\cite{Hu:arXiv2412:08044}.

 We briefly review the functional analytic approach in what follows.

  An important object for the analysis of Hermite expansions is the Dirac
  ladder operator $A$, also known as the annihilation operator, which is defined as
  $$
  A: D(A) \to L^2(\mathbb{R}), \quad A: f \mapsto \frac{1}{\sqrt{2}}(xf + \frac{d}{dx} f).
  $$
  The operator $A$ has the property that $A h_0 = 0$ and $A h_n =
  \sqrt{n} h_{n-1}$ for any $n \geq 1$.  

Denote by $P_N$ the projection operator on the linear span of $N$
Hermite functions, \ie, $$P_N: L^2(\mathbb{R}) \to \operatorname{span}(h_0, \ldots,
h_{N-1}), \quad P_N : f \mapsto \sum_{n=0}^{N-1} \langle f, h_n \rangle_{\mathbb{R}} \ h_n,$$
where $\langle \cdot, \cdot \rangle_{\mathbb{R}}$ denotes the inner product in
$L^2(\mathbb{R})$.

\begin{theorem}
\label{thm:error_estimate}
  For any positive integers $l \leq N$ and any $f \in D(A^l)$, 

	\begin{equation}
		\label{eq:error_estimate}
		\|f - P_{N} f\|_{L^2(\mathbb{R})} \leq c \ \frac{1}{\sqrt{N(N-1)(N-2)\dots (N-l+1)}} \ \|A^l f\|_{L^2(\mathbb{R})},
	\end{equation}
  for some $c>0$ independent of $f, N$ and $l$.
\end{theorem}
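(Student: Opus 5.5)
The estimate follows from Parseval's identity combined with the ladder relation $A h_n = \sqrt{n}\, h_{n-1}$. We expand $f = \sum_{n\ge 0} \hat f_n h_n$ with $\hat f_n = \langle f, h_n\rangle_{\mathbb{R}}$, which gives
\[
\|f - P_N f\|_{L^2(\mathbb{R})}^2 = \sum_{n\ge N} |\hat f_n|^2 .
\]
The task is then to bound this tail by the coefficients of $A^l f$.

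The first step is to identify those coefficients. We use the adjoint $A^* = \frac{1}{\sqrt2}(x - \frac{d}{dx})$, which satisfies $A^* h_{m} = \sqrt{m+1}\, h_{m+1}$. For $f \in D(A^l)$ and each $m \ge 0$, we compute
\[
\langle A^l f, h_m\rangle_{\mathbb{R}} = \langle f, (A^*)^l h_m\rangle_{\mathbb{R}} = \sqrt{(m+1)(m+2)\cdots(m+l)}\;\hat f_{m+l}.
\]
The integration by parts behind this identity is legitimate because the $h_m$ are Schwartz functions and $A^l$ is understood as the closed (distributional) operator on its maximal domain. This is the only point requiring care: one should note that $D(A^l)$ is exactly the set of $f$ with $\sum_n n(n-1)\cdots(n-l+1)|\hat f_n|^2 < \infty$, or at least that the duality pairing with test functions holds. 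Reindexing with $n = m+l$, Parseval gives
\[
\|A^l f\|^2 = \sum_{n\ge l} n(n-1)\cdots(n-l+1)\, |\hat f_n|^2 .
\]

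Since $N \ge l$, every $n \ge N$ appears in this sum. Moreover, the weight $w_l(n) = n(n-1)\cdots(n-l+1)$ is nondecreasing in $n$ for $n \ge l$, because each factor grows. Therefore
\[
\sum_{n\ge N} |\hat f_n|^2 \le \frac{1}{w_l(N)} \sum_{n\ge N} w_l(n)\, |\hat f_n|^2 \le \frac{\|A^l f\|^2}{N(N-1)\cdots(N-l+1)} .
\]
Taking square roots yields \eqref{eq:error_estimate} with $c = 1$. This constant is evidently independent of $f$, $N$ and $l$.

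The main obstacle is the functional-analytic justification in the first step, namely that the coefficient identity holds for every $f\in D(A^l)$ rather than only for smooth, rapidly decaying $f$. I would handle it by defining $A$ as the closure of its restriction to the Schwartz space. Equivalently, one can observe that $A$ is unitarily equivalent, via the Hermite basis, to the weighted shift on $\ell^2$; its domain is then characterized by the weighted-summability condition above. The remainder of the argument is elementary.
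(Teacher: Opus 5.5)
Your proposal is correct and is essentially the argument behind the references the paper cites in place of a proof (Lubich, Theorem III.1.2; Shen et al., Corollary 7.3): use the ladder relation to obtain $\|A^l f\|^2=\sum_{n\ge l} n(n-1)\cdots(n-l+1)\,|\hat f_n|^2$, then bound the Parseval tail $\sum_{n\ge N}|\hat f_n|^2$ by this weighted sum, which gives the estimate with $c=1$. Computing the coefficients of $A^l f$ by moving $(A^*)^l$ onto the Schwartz functions $h_m$ handles the domain question cleanly, since it uses only the distributional definition of $A$ on its maximal domain.
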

\begin{proof}
See~\textcite[Corollary 7.3]{Shen:SM:2024} or ~\textcite[Theorem III.1.2]{Lubich:QCMD}.
\end{proof}
We note that the error estimate \eqref{eq:error_estimate} does not naturally
extend to fractional Sobolev spaces, since the operator $A$ is not self-adjoint. 

In view of the definition of $A$, the estimate \eqref{eq:error_estimate} implies that the convergence rate of the approximation depends on the
smoothness of the target function and on the rate at which it decays at
infinity. For functions
decaying algebraically as $|x| \to \infty$,  the convergence rate is
$\mathcal{O}(N^{-l/2})$. For Schwartz functions, \ie, for smooth functions whose
derivatives of any order decay faster than any polynomial, the
convergence rate is faster than any finite power of $N$ and the so-called
spectral convergence is attained. 

When spectral convergence is attained, the expansion coefficients
typically decay as $e^{-\nu n^\kappa}$  for some  $\nu, \kappa > 0$.
However, the value of $\kappa$ is critical—whether $\kappa = 1$ or $\kappa =
0.001$ leads to vastly different convergence behaviors.
\autoref{thm:error_estimate} does not provide insight into the value of
$\kappa$.

Asymptotic analysis performed by \textcite{Hille:DMJ5:4, Hille:TAMS47:1,
Boyd:SM2000} shows that geometric convergence, \ie, $\kappa=1$, is attained for
entire functions decaying as $\exp(-\alpha |x|^2)$ on the real-axis. Decrease or increase in the decay rate
of the target function reduces the rate of convergence. The following theorem summarizes these results.

\begin{theorem}[Approximating entire functions exhibiting exponential decay]
  \label{thm:asymptotic_entire_gaussian}
  Let $f \in L^2(\mathbb{R})$ admit an entire extension to the complex plane and assume that $f$ satisfies the following decay condition 
  \begin{equation}
    \label{eq:assumption_entire_gaussian}
    |f(x)| \leq c \exp(-\alpha |x|^\gamma) \qquad \text{for all } x \in \mathbb{R},
  \end{equation}
  where $c, \alpha>0$ and $\gamma \geq 1$. Then, the expansion coefficients of
  $f$ in the Hermite basis satisfy
  \begin{equation}
    \label{eq:entire_gaussian_estimate}
    |\langle f, h_n \rangle_{\mathbb{R}}| \sim c' \exp(-\nu n^{\kappa}),
  \end{equation}
  where $c', \nu >0$ and
\[
\kappa =
\begin{cases}
\dfrac{\gamma}{2}, & \text{if } 1 \le \gamma < 2, \\
\dfrac{\gamma}{2(\gamma-1)}, & \text{if } \gamma > 2,  \\
1, & \text{if } \gamma = 2.
\end{cases}
\]
\end{theorem}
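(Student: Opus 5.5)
The natural route is the classical asymptotic analysis of Hille and Boyd: write the Hermite coefficient as a contour integral and evaluate it by steepest descent, balancing the growth of $f$ in the complex plane against the oscillation of $h_n$. Concretely, I would start from
\[
\langle f, h_n\rangle_{\mathbb{R}} = \int_{\mathbb{R}} f(x)\, h_n(x)\,dx
\]
and use the Plancherel--Rotach asymptotics of $h_n$. In the oscillatory region $|x|<\sqrt{2n+1}$, $h_n(x)$ is a combination of $\exp(\pm i\sqrt{2n}\,x)$-type phases of amplitude $n^{-1/4}$, with phase $\Phi_n(x)=\int_0^x\sqrt{2n+1-t^2}\,dt$. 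Beyond the turning point $\sqrt{2n+1}$, $h_n$ decays like a Gaussian.

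Since $f$ is entire, I would shift the contour into the complex plane, $x\mapsto x+iy$. The oscillatory factor $e^{i\Phi_n}$ then contributes $\exp(-y\sqrt{2n})$ (more precisely $\exp(-\operatorname{Im}\Phi_n)$). The price is the growth of $f$ off the real axis. For an entire function with real-axis decay $\exp(-\alpha|x|^\gamma)$, Phragmén--Lindelöf-type reasoning gives the model growth $|f(x+iy)|\lesssim \exp(-\alpha\operatorname{Re}(x+iy)^\gamma)$ in sectors, which grows like $\exp(c|y|^{\gamma})$ for large $|y|$. The coefficient is then bounded by two competing contributions, analysed separately:
\begin{enumerate}
\item[(i)] the \emph{spatial} contribution: $f$ is already tiny at the turning point, giving roughly $\exp(-\alpha n^{\gamma/2})$;
\item[(ii)] the \emph{frequency} contribution: optimizing $\min_y\big(-y\sqrt{2n}+c\,y^\gamma\big)$ at the saddle $y_*\sim n^{1/(2(\gamma-1))}$ gives $\exp(-\nu n^{\gamma/(2(\gamma-1))})$.
\end{enumerate}
The coefficient decays like the slower of the two, since the saddle point of $f h_n$ picks whichever dominates. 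Comparing exponents, $\gamma/2<\gamma/(2(\gamma-1))$ exactly when $\gamma<2$. This yields $\kappa=\gamma/2$ for $1\le\gamma<2$ and $\kappa=\gamma/(2(\gamma-1))$ for $\gamma>2$, with both equal to $1$ at $\gamma=2$, matching the Gaussian case $f=h_0$-type, where the coefficients are geometric.

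The main obstacle is making the \emph{asymptotic equivalence} ``$\sim$'' rigorous rather than just an upper bound. The hypothesis \eqref{eq:assumption_entire_gaussian} controls $f$ only on the real line, while the frequency term needs growth bounds off the axis, and an entire function with that decay could in principle grow faster off the axis. Lower bounds are not even true for all such $f$ (e.g.\ even/odd cancellations make half the coefficients vanish). So I would read the statement as Boyd's heuristic ``generic asymptotic rate'': assume $f$ has the model behaviour $\exp(-\alpha z^\gamma)$ in the relevant sectors, carry out the steepest-descent computation as above, and cite \textcite{Hille:DMJ5:4, Hille:TAMS47:1, Boyd:SM2000} for the detailed saddle-point evaluation rather than reproving it. The upper bound is the part I expect to make fully rigorous; the matching lower bound I would state only along subsequences and under this model assumption.
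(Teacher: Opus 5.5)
Your plan matches the paper's proof, which consists only of invoking Boyd's asymptotic (steepest-descent) analysis of Hermite coefficients (Chapter 17, Theorem 34 of his spectral-methods book); that analysis is the same spatial-versus-frequency saddle balance you sketch, and you likewise defer its details to Boyd. Your caveat is correct, and the off-axis model assumption you adopt is needed even for the upper bound: $f(z)=e^{-z^2}\cos(e^{z^2})$ is entire with $|f(x)|\le e^{-x^2}$ on $\mathbb{R}$, yet $f'\notin L^2(\mathbb{R})$, so $\sum_n n|\langle f,h_n\rangle_{\mathbb{R}}|^2=\infty$ and the coefficients cannot decay geometrically.
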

\begin{proof}
  The proof relies 
  on asymptotic analysis of the expansion coefficients of $f$ in the Hermite basis,
  see \textcite[Chapter 17, Theorem 34.]{Boyd:SM2000}.
\end{proof}

\section{Convergence theory of Hermite expansions under adaptive coordinate transformations}
\label{sec:convergence_iHermite}

We demonstrate in this section that approximating an unknown function \(f\) in
the linear span of the adaptive orthonormal basis \eqref{eq:induced_ons} is
equivalent (\emph{via} a unitary transformation) to approximating the pullback function
\[u = W_{g^{-1}}f
\] in the linear span of the standard Hermite functions. This equivalence
principle allows us to transfer error estimates for approximating functions
in the linear span of Hermite functions, such as those reviewed in
\autoref{sec:error_estimates_Hermite}, to error estimates for approximations
in the linear span of adaptive Hermite functions. 

\subsection{An equivalence principle for Hermite approximations employing adaptive coordinate transformations}

The following lemma summarizes useful properties of the operator \(W_g\) that
are used in the proof of the equivalence principle.
\begin{lemma}
  \label{lem:props_Wg}
  Let $g: \Omega \to \mathbb{R}$ be a $C^1$ diffeomorphism. Then, $W_g$ is unitary
  and its inverse $W_g^{-1}$ is a weighted composition operator induced by the inverse map $g^{-1}$, \ie, $W_g^{-1} = W_{g^{-1}}$.
\end{lemma}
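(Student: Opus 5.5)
The plan is to verify the claim directly from the change-of-variables formula, in three steps: $W_g$ is an isometry, $W_{g^{-1}}$ inverts it on both sides, and these two facts together give unitarity. Throughout, $g:\Omega\to\mathbb{R}$ is a $C^1$ diffeomorphism, so $g^{-1}:\mathbb{R}\to\Omega$ is also $C^1$. By the inverse function theorem, $(g^{-1})'(y) = 1/g'(g^{-1}(y))$, and $g'$ never vanishes. Hence $W_{g^{-1}}u := (u\circ g^{-1})\sqrt{|(g^{-1})'|}$ is a well-defined weighted composition operator acting on functions on $\Omega$.

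\textbf{Isometry.} Take $f\in L^2(\mathbb{R})$ and substitute $y=g(x)$, which is valid for a $C^1$ diffeomorphism with $dy = |g'(x)|\,dx$:
\[
\|W_g f\|_{L^2(\Omega)}^2=\int_\Omega |f(g(x))|^2 |g'(x)|\,dx=\int_{\mathbb{R}}|f(y)|^2\,dy=\|f\|_{L^2(\mathbb{R})}^2 .
\]
So $W_g:L^2(\mathbb{R})\to L^2(\Omega)$ is well defined and isometric. The same computation with $g^{-1}$ in place of $g$ shows that $W_{g^{-1}}:L^2(\Omega)\to L^2(\mathbb{R})$ is isometric. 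Measurability of compositions with a diffeomorphism is standard, and null sets are preserved because $g$ and $g^{-1}$ are locally Lipschitz.

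\textbf{Inverse.} For $u\in L^2(\Omega)$ and $x\in\Omega$, compute
\[
(W_g W_{g^{-1}}u)(x)=u(g^{-1}(g(x)))\sqrt{|(g^{-1})'(g(x))|}\sqrt{|g'(x)|}=u(x),
\]
since $(g^{-1})'(g(x))\,g'(x)=1$ by the chain rule. Symmetrically, $W_{g^{-1}}W_g f = f$ for $f\in L^2(\mathbb{R})$. Thus $W_g$ is a bijective isometry between the two Hilbert spaces, and its inverse is $W_{g^{-1}}$.

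\textbf{Unitarity.} A surjective isometry is unitary. By the polarization identity it preserves inner products, so $W_g^*W_g=I$, and surjectivity then gives $W_g^*=W_g^{-1}=W_{g^{-1}}$. None of these steps is a real obstacle. The only point that needs care is justifying the change of variables for arbitrary $L^2$ functions rather than continuous ones. I would handle this by applying the substitution rule for nonnegative measurable functions under a $C^1$ diffeomorphism, for example by monotone convergence from simple functions or via the general area formula.
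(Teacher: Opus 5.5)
Your proof is correct and takes the same approach as the paper. The paper's proof only asserts that the claims follow from the definition of $W_g$ and the fact that $g$ is a diffeomorphism. Your argument (the change-of-variables isometry, the two-sided inverse check via $(g^{-1})'(g(x))\,g'(x)=1$, and the fact that a surjective isometry is unitary) is exactly the verification the paper leaves implicit.
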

\begin{proof}
  The claims can be readily derived from the definition of $W_g$ and the fact that $g$ is a diffeomorphism.
\end{proof}
 Let 
\begin{equation*}
  P_{N, W}: L^2(\Omega) \to \operatorname{span}(W_g h_0, \dots, W_gh_{N-1}), \quad  P_{N, W}: f \mapsto \sum_{n=0}^{N-1} \langle f, W_g h_n \rangle_\Omega \ W_g h_n,
\end{equation*}
denote the projection operator on the linear span of the adaptive orthonormal basis
\eqref{eq:induced_ons}. 

\begin{theorem}[Equivalence principle for orthonormal bases]
\label{thm:transfer}
For any $f \in L^2(\Omega)$, 
\[
\|f-P_{N,W}f\|_{L^2(\Omega)}
=
\|W_{g^{-1}}f - P_N(W_{g^{-1}}f)\|_{L^2(\mathbb{R})}.
\]
\end{theorem}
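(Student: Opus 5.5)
The plan is to show the intertwining identity $P_{N,W} = W_g P_N W_{g^{-1}}$ on $L^2(\Omega)$ and then use that $W_g$ is unitary (\autoref{lem:props_Wg}) to transfer the norm. Throughout, $W_g$ is viewed as a map $L^2(\mathbb{R})\to L^2(\Omega)$ and $W_{g^{-1}}$ as a map $L^2(\Omega)\to L^2(\mathbb{R})$, the two being mutually inverse.

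The first step is to rewrite the expansion coefficients. Unitarity gives $\langle f, W_g h_n\rangle_\Omega = \langle W_g^{-1} f, h_n\rangle_{\mathbb{R}} = \langle W_{g^{-1}} f, h_n\rangle_{\mathbb{R}}$, using $W_g^* = W_g^{-1} = W_{g^{-1}}$ from \autoref{lem:props_Wg}. This can also be checked directly: substituting $y = g(x)$, and using $g'>0$, the integral $\int_\Omega f(x)\, h_n(g(x))\sqrt{g'(x)}\,dx$ becomes $\int_{\mathbb{R}} f(g^{-1}(y))\sqrt{(g^{-1})'(y)}\, h_n(y)\,dy$. The second step uses linearity of $W_g$ to get
\[
P_{N,W} f = \sum_{n=0}^{N-1} \langle W_{g^{-1}} f, h_n\rangle_{\mathbb{R}}\, W_g h_n = W_g\Bigl(\sum_{n=0}^{N-1} \langle W_{g^{-1}} f, h_n\rangle_{\mathbb{R}}\, h_n\Bigr) = W_g P_N (W_{g^{-1}} f).
\]

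The conclusion then follows by writing $f = W_g W_{g^{-1}} f$. This gives $f - P_{N,W} f = W_g\bigl(W_{g^{-1}}f - P_N W_{g^{-1}} f\bigr)$. Since $W_g$ is an isometry from $L^2(\mathbb{R})$ onto $L^2(\Omega)$, taking norms yields the claimed equality.

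The only point requiring care is the justification of \autoref{lem:props_Wg} in the form used here. One needs that $W_g$ is surjective onto $L^2(\Omega)$ and that $W_{g^{-1}}$ is its two-sided inverse. Both follow from the chain rule identity $(g^{-1})'(g(x))\, g'(x) = 1$, which gives $W_{g^{-1}} W_g = I$ and $W_g W_{g^{-1}} = I$ pointwise. Isometry follows from the change of variables formula for the $C^1$ diffeomorphism $g:\Omega\to\mathbb{R}$. Since the lemma is already stated in the paper, I would simply invoke it, and the remaining steps are purely algebraic.
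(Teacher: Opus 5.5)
Your proposal is correct and follows essentially the paper's argument. The paper applies the unitary $W_{g^{-1}}$ to $f-P_{N,W}f$ and computes $W_{g^{-1}}P_{N,W}f=\sum_{n<N}\langle W_{g^{-1}}f,h_n\rangle_{\mathbb{R}}\,h_n$, which is your intertwining identity $P_{N,W}=W_gP_NW_{g^{-1}}$ read from the other side; your explicit change-of-variables check of the coefficient identity spells out a step the paper leaves implicit.
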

\begin{proof}
  	By \autoref{lem:props_Wg}, $W_g:L^2(\mathbb{R})\to L^2(\Omega)$ is a unitary operator and $W_{g^{-1}} = W_g^{-1}$, \ie,
	$W_{g^{-1}}$ is the inverse of $W_g$. Thus,
   $$\|f - P_{N,W}f\|_{L^2(\Omega)}=\|W_{g^{-1}}
	(f-P_{N,W}f)\|_{L^2(\mathbb{R})}.$$
   The proof is completed by noting that
	\begin{align*}
	W_{g^{-1}} P_{N,W} f &= \sum_{n=0}^{N-1} \langle f, W_g h_n \rangle_{\Omega} \ h_n \\ &= \sum_{n=0}^{N-1} \langle W_{g^{-1}} f,  h_n \rangle_{\mathbb{R}} h_n.
	\end{align*}
\end{proof}
\autoref{thm:transfer} also demonstrates that the expansion coefficients of $f$ in the adaptive orthonormal basis \eqref{eq:induced_ons} are equal to the expansion coefficients of $W_{g^{-1}}f$ in the standard Hermite basis. 

\subsection{Spectral convergence \emph{via} decay matching}

We provide an example demonstrating how a nonlinear coordinate transformation can enhance the convergence of Hermite expansions. 
Specifically, we construct a map \(g\) such that the pullback function \(u =
W_{g^{-1}}f\) has Gaussian decay at infinity. 
This result shows that, compared with classical Hermite expansions, adaptive
Hermite approximations can achieve spectral convergence for a larger class of
functions. 

The construction of such coordinate transformations relies on the theory of optimal transport, which provides a systematic way to construct maps that push one probability measure to another. In our case, we construct a map that pushes the probability measure associated with the squared target function \(f^2\) to the Gaussian measure associated with the squared Hermite functions.

We adopt the following terminology. A function \( f \)
is said to have an algebraic decay along the real axis if there exist constants \( c, \alpha > 0 \)
such that
\[
|f(x)| \leq \frac{c}{(1+|x|)^\alpha}, \quad \text{for all } x \in \mathbb{R}.
\]
We say that \( f \) has exponential decay along the real axis if there exist constants \( c, \alpha, \gamma > 0 \) such that
\[
|f(x)| \leq c \exp\!\big(-\alpha |x|^\gamma\big), \quad \text{for all } x \in \mathbb{R}.
\]
In this case, the decay is classified as follows: it is sub-Gaussian if \( \gamma < 2 \), Gaussian if \( \gamma = 2 \), and super-Gaussian if \( \gamma > 2 \).

\begin{theorem}
  \label{thm:adapt_to_decay}
  Assume $f: \Omega \to \mathbb{R}$, $f\in L^2(\Omega)$ has a real axis decay
  that is at most exponential. Then, there exists a diffeomorphism $g: \Omega
  \to
  \mathbb{R}$ such
  that $W_{g^{-1}}f$ has Gaussian decay along the real axis.
\end{theorem}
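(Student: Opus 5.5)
Since the statement only asks for Gaussian decay of the pullback, we construct $g$ as a monotone transport map from a comparison density built out of the decay envelope of $f$ to the standard Gaussian density. By hypothesis there are $c,\alpha>0$ and an envelope $\rho$ with $|f(x)|\le c\,\rho(x)$, where $\rho$ is either algebraic, $\rho(x)=(1+|x|)^{-\alpha}$, or exponential, $\rho(x)=\exp(-\alpha|x|^\gamma)$. We take $\Omega=\mathbb{R}$, which is the setting in which ``decay along the real axis'' is meaningful; a general unbounded $\Omega$ would first be mapped to $\mathbb{R}$ by a fixed diffeomorphism.

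\textbf{Step 1 (comparison density).} Choose a smooth, strictly positive, integrable density $\mu$ on $\mathbb{R}$ with
\[
\rho(x)^2\le C\,\mu(x)^{1+\varepsilon}\qquad\text{for large }|x|,
\]
for some $\varepsilon\in(0,1)$. For algebraic $\rho$ we take $\mu\propto(1+|x|)^{-\beta}$ with $1<\beta<2\alpha$. This requires $2\alpha>1$, which holds because $f\in L^2$ together with the optimality of the envelope forces $\alpha>1/2$. If the bound is merely an upper bound with $\alpha\le 1/2$, we instead take $\mu\propto(1+|x|)^{-1-\delta}$; the cost is that we only get a weaker power of the Gaussian, which still counts as Gaussian decay since the constant $\alpha$ in the definition is free. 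For exponential $\rho$ we take $\mu\propto\exp(-\beta|x|^\gamma)$ with $\beta<2\alpha$.

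\textbf{Step 2 (monotone transport).} Let $\nu(y)=\pi^{-1/2}e^{-y^2}=h_0(y)^2$, and let $F_\mu$, $F_\nu$ be the two cumulative distribution functions. Define $g=F_\nu^{-1}\circ F_\mu$. Since both densities are smooth and positive, $g$ is a $C^1$ increasing diffeomorphism of $\mathbb{R}$ satisfying
\[
\nu(g(x))\,g'(x)=\mu(x).
\]
Writing $x=g^{-1}(y)$, the pullback is
\[
u(y)=f(x)\sqrt{(g^{-1})'(y)}=f(x)\sqrt{\nu(y)/\mu(x)}.
\]
Hence
\[
|u(y)|\le c\,\rho(x)\,\mu(x)^{-1/2}\,\pi^{-1/4}e^{-y^2/2}.
\]

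\textbf{Step 3 (the main obstacle: controlling the factor $\rho/\sqrt{\mu}$ at the transported point).} By Step 1,
\[
\rho(x)/\sqrt{\mu(x)}\le C\,\mu(x)^{\varepsilon/2},
\]
which is bounded, so $|u(y)|\lesssim e^{-y^2/2}$ and $u$ has Gaussian decay. The delicate point is that $\rho/\sqrt{\mu}$ must not blow up, i.e.\ $\mu$ must not be lighter-tailed than $\rho^2$; this is the only place where the hypothesis ``at most exponential'' and the choice $\beta<2\alpha$ enter. To confirm that the transported point does not spoil the estimate, we compare tails via Mills-ratio asymptotics. For $y\to\infty$,
\[
1-F_\nu(y)\sim \frac{e^{-y^2}}{2\sqrt{\pi}\,y},
\]
while $1-F_\mu(x)$ is of order $x^{1-\beta}$ in the algebraic case and $x^{1-\gamma}e^{-\beta x^\gamma}$ in the exponential case. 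This gives $y^2\sim(\beta-1)\log x$, respectively $y^2\sim\beta x^\gamma$. In either case $\mu(x)^{\varepsilon/2}$ behaves like $e^{-c'y^2}$, so in fact $u$ decays like $e^{-(1/2+c')y^2}$, which is still of Gaussian type. The same analysis applies as $y\to-\infty$.

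Finally, $W_{g^{-1}}$ is well defined and unitary by \autoref{lem:props_Wg}, so $u\in L^2(\mathbb{R})$.
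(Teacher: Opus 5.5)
Your envelope-based construction works whenever the envelope is square-integrable, but it has a genuine gap for algebraic decay with $\alpha\le 1/2$, which the hypothesis allows. Your claim that $f\in L^2$ plus ``optimality of the envelope'' forces $\alpha>1/2$ is false. The hypothesis only supplies some $\alpha>0$, and even the sharp exponent can lie below $1/2$. For example, a smooth $f$ made of bumps of height $n^{-1/4}$ and width $n^{-2}$ centred at the integers $n\ge 1$ is in $L^2$, yet satisfies no bound $|f(x)|\le c(1+|x|)^{-\alpha}$ with $\alpha>1/4$.

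Your fallback $\mu\propto(1+|x|)^{-1-\delta}$ does not give a weaker Gaussian either. From $1-F_\mu(x)\simeq x^{-\delta}$ and the Mills ratio you get $\nu(y)\simeq y\,x^{-\delta}$ along $y=g(x)$. Your bound then reads $|u(y)|^2\lesssim \rho(x)^2\nu(y)/\mu(x)\simeq y\,x^{1-2\alpha}$, which does not decay at all.

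In fact no choice of $\mu$, and no diffeomorphism whatsoever, can rescue an argument that uses only $|f|\le c\rho$. Since $|u(g(x))|^2=f(x)^2/g'(x)$, an estimate $c^2\rho(x)^2/g'(x)\le C e^{-2a g(x)^2}$ would give $\int_{\mathbb{R}}\rho^2\le (C/c^2)\int_{\mathbb{R}} e^{-2ay^2}\,dy<\infty$. So it forces $\rho\in L^2$, which fails when $\alpha\le 1/2$.

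The missing idea is to transport a density built from $f$ itself rather than from its envelope, as the paper does. It takes $p\propto f^2+\exp(-\exp(x^2))$ and $g=K^{-1}\circ P$. Then $f^2\le c\,p$ holds by construction, and the integrability of $p$ is exactly the hypothesis $f\in L^2$. The bound $f^2\circ g^{-1}\,(g^{-1})'\le c\,k$ follows in one line, with no Mills-ratio asymptotics and no use of the specific decay rate.

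The trade-off is regularity. The paper's $g$ is only as smooth as $f^2$: you need $f$ continuous to get a $C^1$ diffeomorphism with $g'=p/(k\circ g)>0$. Your envelope-based $g$, by contrast, is smooth for arbitrary $f$. So your route is a genuine improvement when $\rho\in L^2$, but it cannot replace the paper's argument when $\rho\notin L^2$.
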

\begin{proof}

Let $f$ be an arbitrary function satisfying the hypothesis of the theorem. Define $p: \mathbb{R} \to \mathbb{R}^+$ 
by
\begin{equation}
\label{eq:definition_p}
  p(x) := \frac{f^2(x) + \exp(-\exp(x^2))}{\int_{\mathbb{R}} f^2(y) + \exp(-\exp(y^2)) \ dy}.  
\end{equation}

$p$ is a probability density function that is non-zero everywhere. Moreover, $p$
has the same decay behavior as $f$ since $\exp(-\exp(x^2))$ decays faster than
any function exhibiting exponential decay with finite exponent $\gamma$.

  The following argument is an adaptation of a standard result in optimal transport theory; see, \eg, \textcite{Villani:OT:2003}. The argument is provided here for the sake of completeness. 

Set $k(x) = \frac{1}{\sqrt{ \pi}}\exp(-x^2)$.
Note that $p$ and $k$ represent the densities of two probability measures $\mu$
and $\nu$ on $\mathbb{R}$, respectively, \ie, $d \mu(x) = p(x) \ dx$ and $d
\nu(x) = k(x) \ dx$. Define their cumulative distributions by
$$
P(x) = \int_{-\infty}^x p(t) \ dt, \quad K(x) = \int_{-\infty}^x k(t) \ dt.
$$
Note that $P$ and $K$ are strictly monotone functions and therefore invertible. Set $g = K^{-1} \circ P$. 
Clearly, $g$ is invertible and it pushes $\mu$ to $\nu$, \ie, for any
measurable $A \subseteq \mathbb{R}$ we have
$$
\int_{\mathbb{R}} \chi_A \bigl( g^{-1} (x)\bigr) \ k(x) \ dx = \int_{\mathbb{R}} \chi_A (x)\ p(x) \ dx,
$$
where $\chi_A$ denotes the characteristic function of the set $A$. It follows
that 
$$
\int_{A} \frac{1}{\sqrt{ \pi}}\exp\bigl(-g(x)^2\bigr) \frac{1}{|(g^{-1})' \circ g(x)|} \ dx = \int_{A} p(x) \ dx.
$$
Since both $P$ and $K$ are Lipschitz continuous, smooth and strictly monotone, so is
$g$.
Note that $\bigl((g^{-1})'\circ g\bigr)^{-1}= g'$. This implies that for almost every $x \in \mathbb{R}$ we have 
$$
\frac{1}{\sqrt{ \pi}}\exp(-x^2)  = (p \circ g^{-1})(x) |(g^{-1})'(x)|.
$$

We deduce that $$f^2 \circ g^{-1} \ |(g^{-1})'| = \frac{c}{\sqrt{ \pi}}\exp(-x^2) - \exp(-\exp(g^{-1}(x)^2))\ |(g^{-1})'| \leq \frac{c}{\sqrt{ \pi}}\exp(-x^2),$$

where $c = \int_{\mathbb{R}} f^2(y) + \exp(-\exp(y^2)) \ dy$.
Therefore, $W_{g^{-1}}f$ decays as $\exp(-x^2/2)$.

Note that adding $\exp(-\exp(x^2))$ in the definition of $p$ guarantees that
$g$ is 
strictly monotone, even if $f^2$ is not strictly positive everywhere.
\end{proof}
\begin{corollary}
  \label{cor:adapt_to_decay}
  Let $f \in L^2(\Omega)$ be a smooth function
  satisfying the hypothesis of \autoref{thm:adapt_to_decay}. Then, there
  exists a coordinate transformation $g$ and an orthonormal basis $(W_g
  h_n)_{n=0}^\infty$ such that the approximation \eqref{eq:flows_ansatz}
  converges spectrally fast to $f$. If in addition $W_{g^{-1}}f$ admits an entire
  extension to the complex plane, then the approximation \eqref{eq:flows_ansatz}
  converges geometrically fast to $f$.
\end{corollary}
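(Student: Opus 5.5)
We take the map $g$ constructed in the proof of \autoref{thm:adapt_to_decay}, namely $g = K^{-1}\circ P$ with $p$ defined in \eqref{eq:definition_p}. By \autoref{thm:transfer}, the error of the orthonormal version of \eqref{eq:flows_ansatz} satisfies
\[
\|f-P_{N,W}f\|_{L^2(\Omega)} = \|u - P_N u\|_{L^2(\mathbb{R})}, \qquad u := W_{g^{-1}}f .
\]
The whole task therefore reduces to showing that $u$ has the regularity and decay that the classical results of \autoref{sec:error_estimates_Hermite} require.

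\textbf{Step 1: smoothness of $u$.} Since $f$ is smooth, $p$ is smooth and strictly positive. Hence $P$ is a smooth diffeomorphism with $P' = p > 0$, and likewise for $K$, so $g$ and $g^{-1}$ are smooth. Moreover $(g^{-1})' = (k\circ g^{-1})/\bigl(p\circ g^{-1}\bigr)\cdot\ldots$; more precisely, from the identity in the proof,
\[
(g^{-1})'(x) = \frac{k(x)}{p(g^{-1}(x))} > 0 .
\]
So $\sqrt{(g^{-1})'}$ is smooth, and $u = (f\circ g^{-1})\sqrt{(g^{-1})'}$ is smooth.

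\textbf{Step 2: $u \in D(A^l)$ for every $l$.} By \autoref{thm:adapt_to_decay}, $|u(x)| \le c\,e^{-x^2/2}$. I also need polynomially weighted bounds on the derivatives $u^{(j)}$, so that $x^i u^{(j)} \in L^2(\mathbb{R})$ for all $i,j$; then $A^l u$ is a finite combination of such terms. The plan is to write $u = \sqrt{p\circ g^{-1}\,(g^{-1})'}\cdot \theta$ with $\theta = f/\sqrt{p\,c}$, using that $(p\circ g^{-1})(g^{-1})' = k$. This gives
\[
u = \sqrt{k}\,\bigl(\theta\circ g^{-1}\bigr)\,c^{1/2}, \qquad |\theta|\le 1 .
\]
Derivatives of $u$ then involve derivatives of $\theta\circ g^{-1}$, and by the chain rule these involve derivatives of $g^{-1}$ and of $\theta$.

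\textbf{Main obstacle.} The hard step is controlling these derivatives. The derivatives of $g^{-1}$ can grow, since $(g^{-1})' = k/(p\circ g^{-1})$ may behave like a ratio of two rapidly decaying functions. I expect the stated hypotheses to be insufficient in general, and I would add a mild assumption: the derivatives of $f$ (equivalently of $\log p$) grow at most polynomially relative to $p$, as holds for typical exponentially decaying smooth functions. Under such an assumption I would bound $(g^{-1})^{(j)}$ and $\theta^{(j)}$ by polynomials in $x$. Then every $x^i u^{(j)}$ is bounded by a polynomial times $e^{-x^2/2}$, hence lies in $L^2(\mathbb{R})$. With $u\in D(A^l)$ for all $l$, \autoref{thm:error_estimate} gives
\[
\|f-P_{N,W}f\|_{L^2(\Omega)} = \mathcal{O}\bigl(N^{-l/2}\bigr) \quad \text{for every } l,
\]
which is spectral convergence.

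\textbf{Step 3: the entire case.} If $u$ admits an entire extension, we already have $|u(x)| \le c\,e^{-x^2/2}$ on $\mathbb{R}$. This is the hypothesis \eqref{eq:assumption_entire_gaussian} with $\gamma = 2$, so \autoref{thm:asymptotic_entire_gaussian} gives $|\langle u,h_n\rangle_{\mathbb{R}}| \sim c' e^{-\nu n}$. By \autoref{thm:transfer}, these are exactly the coefficients of $f$ in the basis $(W_g h_n)$. Summing the squared tail of the coefficients gives
\[
\|f-P_{N,W}f\|_{L^2(\Omega)} = \mathcal{O}\bigl(e^{-\nu N}\bigr),
\]
which is geometric convergence.
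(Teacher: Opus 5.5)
Your route is the paper's: the same $g=K^{-1}\circ P$, the equivalence principle, then \autoref{thm:error_estimate} and \autoref{thm:asymptotic_entire_gaussian}. The paper handles your Step~2 in one line (Gaussian decay of $u$ plus smoothness of $f$ makes $u$ a Schwartz function), and you are right not to accept that line. As a result, your proposal, like the paper's proof, does not establish the corollary under its stated hypotheses. Your ``I expect'' can be sharpened to a counterexample for this $g$, the one both you and the paper use.

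Your decomposition gives $u^2=ck\,(\theta\circ g^{-1})^2$, with $|\theta|\approx 1$ except where $|f|\lesssim e^{-e^{y^2}/2}$. So $|u|$ is pinned to $\sqrt{ck}$, and each sign change of $f$ forces $u$ to switch between $\pm\sqrt{ck}$ over an extremely short $x$-interval. Precisely, substituting $x=g(y)$ and using $k(g(y))\,g'(y)=p(y)$, one finds that $u'\in L^2(\mathbb{R})$ if and only if $\int_{\mathbb{R}}\theta'(y)^2\,k(g(y))^2\,p(y)^{-1}\,dy<\infty$. A simple zero $y_0$ of $f$ contributes $\asymp k(g(y_0))^2\,|f'(y_0)|\,e^{\frac{3}{2}e^{y_0^2}}$ to this integral. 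For the paper's own $f_1$ (zeros at $n\pi-1.2$, $n\in\mathbb{Z}$) these contributions are unbounded. Hence $u\notin D(A)$, and \autoref{thm:error_estimate} gives nothing for any $l\geq 1$.

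An extra hypothesis is therefore necessary on this route, and it must exclude sign changes of $f$ near infinity. Your condition on the derivatives of $\log p$ does this implicitly, since near such a zero $(\log p)'$ reaches about $|f'(y_0)|\,e^{\frac{1}{2}e^{y_0^2}}$. So the condition is not mild (it excludes both of the paper's test functions), and it is not equivalent to a growth condition on the derivatives of $f$.

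Even granting such a hypothesis, your sketch of Step~2 fails in the algebraic-decay case the corollary is advertised for. If $f^2\sim|y|^{-2\alpha}$, solving $1-P(g^{-1}(x))=1-K(x)$ gives $g^{-1}(x)\approx e^{x^2/(2\alpha-1)}$ up to polynomial factors, and $(g^{-1})'(x)\approx\frac{2x}{2\alpha-1}\,g^{-1}(x)$. So the derivatives of $g^{-1}$ admit no polynomial bounds.

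What rescues the argument is the other factor in Fa\`a di Bruno's formula. Assume that for large $|y|$ the function $f$ has one sign, $|f(y)|\geq e^{-C|y|^\gamma}$, and $f^{(m)}/f$ is polynomially bounded. There $\theta=\pm\bigl(1+e^{-e^{y^2}}f^{-2}\bigr)^{-1/2}$, so $\theta^{(i)}(y)=O\bigl(e^{-e^{y^2}/2}\bigr)$ for $i\geq 1$. Meanwhile $(g^{-1})'=k/(p\circ g^{-1})\leq c\,e^{2C|g^{-1}|^\gamma}$. By induction from $(g^{-1})''=(g^{-1})'\bigl(-2x-(\log p)'(g^{-1})\,(g^{-1})'\bigr)$, together with $|x|\lesssim|g^{-1}(x)|^{\gamma/2}$, every $(g^{-1})^{(j)}$ is $O\bigl(e^{C_j|g^{-1}|^\gamma}\bigr)$. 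The doubly exponential smallness of $\theta^{(i)}$ beats this growth. Hence all derivatives of $\theta\circ g^{-1}$ of order at least one are bounded and vanish at infinity, and $u=\sqrt{ck}\,(\theta\circ g^{-1})$ is a Schwartz function, for algebraic and exponential decay alike.

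Your Step~3 matches the paper's. Note, however, that \autoref{thm:asymptotic_entire_gaussian}, stated with only an upper bound, is too strong as written: $e^{-x^4}$ satisfies its hypothesis with both $\gamma=2$ and $\gamma=4$. Boyd's asymptotics need the actual decay rate, which under the hypothesis above is supplied by $|u|=\sqrt{ck}\,(1-o(1))$.
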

\begin{proof}
The diffeomorphism $g=K^{-1} \circ P$ constructed in the proof of
\autoref{thm:adapt_to_decay} ensures that the pullback function $W_{g^{-1}}f$
decays as $\exp(-x^2/2)$. Together with the smoothness of $f$, this implies that
$W_{g^{-1}}f$ is a Schwartz function and hence belongs to $ D(A^l)$ for any $l \in \mathbb{N}$. Therefore, the approximation
\eqref{eq:flows_ansatz} converges spectrally to $f$ by
\autoref{thm:error_estimate}.

If, in addition, $W_{g^{-1}}f$ admits an entire extension to the complex plane,
then the convergence of the approximation \eqref{eq:flows_ansatz} is geometric
by \autoref{thm:asymptotic_entire_gaussian}.
\end{proof}

\autoref{cor:adapt_to_decay} extends the class of functions for which spectral
convergence can be attained by Hermite expansions. In particular, spectral
convergence can be attained for functions with algebraic decay along the real
axis, which are not well approximated by classical Hermite expansions. Geometric
convergence can
also be attained for functions with sub- or super-Gaussian decay along the real
axis, as long as the pull back function $W_{g^{-1}}f$ admits an entire extension to the complex plane.

We observe that one can also improve the convergence of entire functions
with real-axis decay by
increasing or decreasing the decay rate of the pullback function $W_{g^{-1}}f$
at infinity. This can be achieved using, \eg, a power-law coordinate transformation $g$, such as 
\begin{equation}
\label{eq:example_g}
g: \mathbb{R} \to \mathbb{R}, \quad g(x) = \frac{x}{(x^2 + \epsilon)^{\beta}}, \quad \text{with } \epsilon > 0, \ \beta \in \mathbb{R},\  \beta<0.5.
\end{equation}

\( g \) is smooth, and strictly increasing. For $\beta >0$, $|g(x)| < \
|x|^{1-2\beta}$ for any $x \in \mathbb{R}$. 
For functions decaying slower than Hermite functions, the decay rate of the pullback function $W_{g^{-1}}f$ can be
increased by choosing $\beta$ such that $0< \beta<1/2$. In such a case, we have
a sublinear coordinate transformation that can be used to increase the decay
rate of a target function. Similarly, for functions decaying faster than Hermite
functions, the decay rate of the pullback function $W_{g^{-1}}f$ can be
decreased by choosing $\beta$ such that $\beta < 0$. In such a case, we have a
superlinear coordinate transformation, \ie, $g$ satisfies $|g(x)| > \ |x|^{1-2\beta}$ for any $x \in
\mathbb{R}$. Such a transformation can be used to decrease the decay
rate of a target function.

\section{Numerical experiments}
\label{sec:num_exp}
\subsection{Interpolation}
In the following, we study the approximation of the target functions
\begin{align}
  \label{eq:target_function}
  f_1(x) &= c_1 \ \sin(x + 1.2)\, \frac{1}{(1+x^2)^4}, \\
  f_2(x)&= c_2 \ \sin(x + 1.2)\, \exp(-x^4/2 + x^2/2),  \nonumber 
\end{align}
for $x \in \mathbb{R}$. We used 10000 uniformly spaced sample points to sample
$(-31,31)$ and $(-13,13)$ for $f_1$ and $f_2$, respectively. We
denote by $p_1$ and $p_2$ the decay factors of $f_1$ and $f_2$, respectively,
\ie, $p_1(x) = 1/(1+x^2)^4$ and $p_2(x) = \exp(-(x^4-x^2)/2)$. We set
$c_1$ and $c_2$ such that $\|p_1\|_{L^2(\mathbb{R})} = \|p_2\|_{L^2(\mathbb{R})}
= 1$. 

For $f_2$, standard Hermite approximations exhibit suboptimal convergence rates due to the
mismatch between the decay behavior of $p_2$ and the intrinsic
Gaussian decay $\exp(-x^2/2)$ of Hermite functions. Asymptotic analysis of Boyd shows that the error decays as $\mathcal{O}(s \exp(-q N^{2/3}))$ for some constants $q,
s > 0$~\cite{Boyd:MC35:152}. For $f_1$, the Hermite expansion is suboptimal
due to decay mismatch and the presence of singularities at $z = \pm i$. Asymptotic
analysis by Boyd demonstrates that the Hermite coefficients $a_n$ decay as
$\mathcal{O}\bigl(\frac{1}{N^{41/12}}\bigr)$, see \textcite[Eq. 17.28]{Boyd:SM2000}

We constructed approximations of the target functions \( f_1, f_2 \) in the linear
spans of four different bases. For $i=1,2$, the approximations take the following forms
\begin{align*}
\tilde{f}^N_i(x) &= \sum_{n=0}^{N-1} c_{n,i} h_n(x), \\
\tilde{f}^{N,g}_i(x) &= \sum_{n=0}^{N-1} c_{n,i}^{g} \, W_{g_{i}} h_n(x).
\end{align*}

We modelled the coordinate transformations $g_i$ using three different
approaches: a linear model composed of a scaling and a shift, a power-law model of the form given in
\autoref{eq:example_g}, and an iResNet architecture.

We optimized the parameters of all three models as to align the decaying functions $p_1$ and $p_2$ to the
Gaussian decay. This was achieved by solving

\begin{equation}
  \label{Eq:opt_problem}
\mathcal{L(\theta)}:= \zeta \ \mathcal{L}_1(\theta) + (1-\zeta) \ \mathcal{L}_2(\theta) \longrightarrow \min_{\theta},
\end{equation}
where 
\begin{equation*}
\mathcal{L}^2_1(\theta) =  \ \sum_{k<10000} \bigl( \frac{1}{\sqrt{\pi}} \exp(-g_\theta(x_k)^2) g_\theta'(x_k)- p_i (x_k)\bigr)^2 \ w_k, \end{equation*}
and
\begin{equation*}
\mathcal{L}_2^2(\theta) =  \sum_{k<10000} \bigl( \frac{1}{\sqrt{\pi}} \exp(-g_\theta(x_k)^2) g_\theta'(x_k)- p_i(x_k)  \ \bigr)^2 (\frac{1+x_k^2}{p_i(x_k)+10^{-30}}) \ w_k
\end{equation*}
Here, $x_k$ are the quadrature points and $w_k$ are the corresponding quadrature
weights. The loss $\mathcal{L}_1$ is the discretized $L^2$ distance between the
Gaussian density and the pullback density induced by \( g_\theta \).
$\mathcal{L}_2$ is a weighted version of $\mathcal{L}_1$ that punishes the
mismatch between the Gaussian density and the pullback density more heavily
in the tail regions. This encourages the coordinate transformation to better align the decay of the target function with that of the Hermite functions, which is crucial for achieving faster convergence.

The parameter $\zeta \in [0,1]$ controls the relative importance of
these two objectives. We set $\zeta = 0.9$ for $f_1$ and $\zeta = 0.3$ for $f_2$.

For approximations of $f_2$ employing the power-law transformation
\eqref{eq:example_g} we fixed the value of $\epsilon$ to $2.45$ and optimized
the value of $\beta$. Whereas for $f_1$, 
we optimized the values of $\epsilon$ and $\beta$ jointly. 
For the iResNet model, we used a 10 blocks, each composed of two hidden layers
with 8 units and one output unit. We employed the lipswish activation function.
We optimized the parameters of all the models using the Adam optimizer. More details on the computational setup are provided in \autoref{app:error_comp}.

For \( f_2 \), we also employed a coordinate transformation as defined in \autoref{eq:example_g}, where the parameters \( \epsilon \) and
\( \beta \) were fixed to $8$ and $-0.45$, respectively, based on asymptotic considerations (see discussion below).

Once the parameters of all transformations were determined, we computed the
coefficients $c_{n,i}^g, c_{n,i}$ by projecting the target
functions on the corresponding basis functions for various values of $N$. We then computed the approximation errors $\|f_i - \tilde{f}_i\|_{L^2(\mathbb{R})}$ for each basis and plotted the results in \autoref{fig:2}.

\begin{figure}[h]
	\centering
  \includegraphics[width=\textwidth]{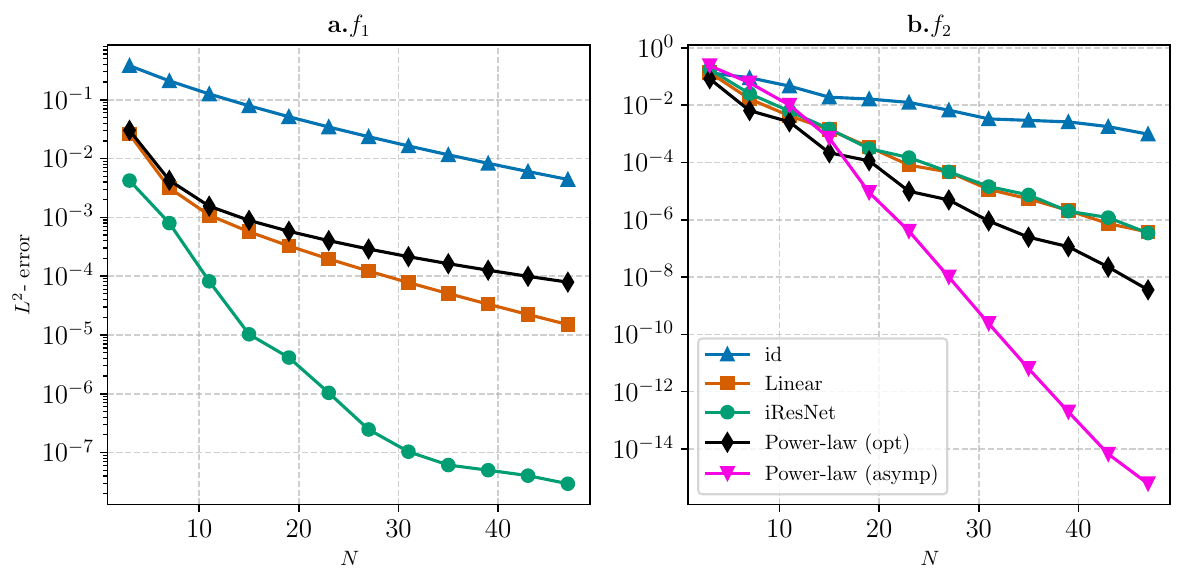}
  \caption{Empirical convergence of function approximations using Hermite functions (blue upward triangles), Hermite functions with a linear coordinate transformation (red squares), Hermite functions with a power-law transformation (black diamonds), Hermite functions with a power-law transformation using asymptotically selected parameters (purple upward triangles), and Hermite functions with an iResNet-based transformation (green circles). Results are shown for the target functions \eqref{eq:target_function}.}
  \label{fig:2}
\end{figure}

As expected, bases tailored to the decay behavior of the target function yield
faster convergence and greater accuracy than standard Hermite approximations.

To gain a more concrete insight into the improvement of convergence enabled by
the use of adaptive coordinate transformations, we estimated the convergence of
the approximations as follows.
For $f_1$, approximations are expected to converge at a rate $N^{-l}$ for some
$l > 0$, see \autoref{thm:error_estimate}. To estimate $l$, we performed linear regression on the
logarithm of the approximation errors as a function of $\log(N)$ for each basis. For $f_2$, approximations are expected to converge at a rate $
\exp(-\nu N^{\kappa})$ for some constants $\nu, \kappa > 0$. To
estimate $\nu$ and $\kappa$, we performed linear regression on the logarithm of the negative logarithm of the
approximation errors as a function of $\log(N)$ for each basis. Results are
provided in \autoref{tab:2}.

\begin{table}[h]
\centering
\begin{tabular}{|c|c|c|}
\hline
\textbf{Transformation} & \textbf{$f_1$} & \textbf{$f_2$} \\
\hline
Hermite & $l = 1.67$ & $\nu=1.06,\ \kappa = 0.47$ \\
\hline
Linear & $l = 2.64$ & $\nu=0.59,\ \kappa = 0.72$ \\
\hline
iResNet & $l = 4.87$ & $\nu=0.79,\ \kappa = 0.77$ \\
\hline
Power-law (opt) & $l = 2.10$ & $\nu=1.12,\ \kappa = 0.73$ \\
\hline
Power-law (asymp) & -- & $\nu=0.28,\ \kappa = 1.25$ \\
\hline
\end{tabular}
\caption{Estimated convergence rates for different coordinate transformations applied to the target functions \eqref{eq:target_function}.}
\label{tab:2}
\end{table}

For $f_1$, the optimization of linear and power-law coordinate transformation
improves the convergence rate of Hermite approximation by a factor of approximately
$1.58$ and $1.2$, respectively. 
Approximations employing an iResNet transformation
of coordinates improve the convergence rate of Hermite approximations by a factor of
$\approx 2.92$. The optimal parameters of the iResNet, linear, and power-law
models achieve a total loss of $\mathcal{L}(\theta)\approx 10^{-3}$, $2.7 \times 10^{-2}$,
and $3 \times 10^{-2}$, respectively. The better performance of the iResNet model can be attributed to its greater expressivity compared to the linear and power-law models, which allows it to better align the decay of the target function with that of the Hermite functions. 

Since the convergence of approximations of $f_1$ are also affected by the
presence of singularities, it is instructive to look at the singularities of the
pullback for the three coordinate systems. For all three coordinate systems, the
singularities of the pullback function are located at $g(\pm i)$. This quantity
can be computed exactly for the linear and power-law models and can be estimated
using Taylor expansion for the iResNet model. We calculated $g(\pm i)$ for the
three coordinate systems and found that the distance to the real axis of the
closest singularity is approximately given by $2.67$ for the linear model,
$2.58$ for the power-law model, and $2.69$ for the iResNet model.  

For $f_2$, the exponent of exponential convergence is increased by a factor of 1.53, and
1.64, upon employing a linear and iResNet transformation. The use of an
optimized power-law transformation improves convergence by increasing the
constant $\nu$.
In contrast, the use
of the power-law transformation with the asymptotically motivated parameters
renders the convergence super geometric, \ie, $\alpha
> 1$. To understand these results we look at the asymptotics of the
optimal map.

As demonstrated in \autoref{thm:adapt_to_decay}, the optimal map $g$ should be such that 
$$
\exp(-g(x)^2) g'(x) \approx \exp(-x^4/2 ). 
$$
Taking the logarithm of both sides yields
$$
-g(x)^2 +  \log(g'(x)) \approx -\frac{x
^4}{2}.
$$
As \( |x| \to \infty \), the term \( \log(g'(x)) \) is negligible compared to the remaining terms. 
This suggests that \( g(x) \) grows quadratically, i.e., \( g(x) \sim x^2 \), and consequently \( |g'(x)| \sim |x| \).
This behavior is consistent with the optimized power-law model, for which \( \beta \approx -0.3 \).

Motivated by this observation, we considered parameter choices for \( \epsilon \) and \( \beta \) in the transformation \eqref{eq:example_g} based on asymptotic considerations rather than loss-based optimization. 
We found that the choice \( \epsilon = 8 \) and \( \beta = 0.45 \) yields approximations with a significantly improved convergence rate. 
This model was used in the numerical experiments and is referred to as the
power-law model with asymptotically selected parameters (asymp) in \autoref{fig:2}
and \autoref{tab:2}.

Notably, the loss associated with these parameters $\approx 0.98$ is higher than
that obtained \emph{via} optimization $\approx 0.52$. This suggests that the loss functional
may not fully capture the asymptotic features governing convergence.

For $f_1$, the asymptotic analysis of the optimal map suggests that $g(x) \sim
\sqrt{8 \ \log(|x|)}$ as $|x| \to \infty$, which no power law $|x|^{1-2\beta}$
can satisfy for any value $\beta$.

Due to the Lipschitz contraint of the iResNet architecture, the coordinate
transformation modelled by the iResNet is such that $g_r'(x) \leq (1+L)^P$ where
$L$ is the Lipschitz constant of the residual blocks and $P$ is the number of
blocks. Therefore, an iResNet-based model cannot satisfy a growth condition
of the form $g(x) \sim x^2$ as $|x| \to \infty$. This provides a possible explanation for the suboptimal convergence of approximations employing an iResNet-based coordinate transformation for $f_2$ compared to the power-law model.  
\subsection{Application to time-independent Schrödinger equations}
\label{subsection:Morse}
We consider the task of approximating the lowest 23 eigenvalues (also termed energies) of the Morse
operator given by \eqref{eq:Morse}.
  We set the Morse potential parameters $D_e$, $a_M$, and $\mu$ respectively to $42301$ \invcm,
$2.1440 \textup{~\AA}^{-1}$, and $1.5743 \times 10^{-27}$ kg, whereby representing a typical OH stretch in
molecules. 

Approximations to the eigenfunctions $(\Psi_m)_{m=0}^{22}$ of \eqref{eq:Morse} were constructed in the linear span
of Hermite functions, \ie,
\begin{align}
\label{eq:lin_Ansatz}
  \Psi_m &\approx \tilde{\Psi}_m \nonumber\\
&= \sum_{n<N} c_{n,m} h_n, \quad m=0, \dots, 22. 
\end{align} 
To derive a weak formulation of the infinite-dimensional eigenvalue
problem, we chose both the test and trial functions to be equal to the ansatz
functions given in \eqref{eq:lin_Ansatz}. This leads to a finite-dimensional
eigenvalue problem, from which the expansion coefficients \(\{c_{n,m}\}_{n=0,\,
m=0}^{N-1,\,22}\) can be computed.

Similarly, we constructed approximations in the linear span of Hermite functions
expressed in adaptive coordinates, \ie, 
\begin{align}
  \label{eq:nlin_Ansatz}
    \Psi_m &\approx \tilde{\Psi}_m \nonumber\\
  &= \sum_{n<N} c_{n,m} h_n \circ g \sqrt{g'}, \quad m=0, \dots, 22, 
  \end{align}
  where $g$ is a parametrized invertible function. We derived a weak
  Galerkin formulation by setting the trial and test functions to be equal to
  \eqref{eq:nlin_Ansatz}. We modelled the map $g$ \emph{via} a linear model $g_l$
  consisting of a scaling and a shifting parameter, and an iResNet $g=g_\mathrm{r}$,
  composed of 5 blocks, see \autoref{subsec:comp_details} for more information
  on the computational details. The parameters $\theta$
  of $g_l$ and $g_r$ and \(C = \{c_{n,m}\}_{n=0,\,
m=0}^{N-1,\,22}\) were then optimized for $N=23$ by minimizing the loss function
\begin{equation}
  \mathcal{L}(\theta,C) := \sum_{m<23} \langle \tilde{\Psi}_m, H \tilde{\Psi}_m \rangle_{\mathbb{R}}, 
\end{equation}
which represents the Rayleigh quotient of the approximations. The optimization
was performed using the Adam optimizer. 

The optimized parameters were subsequently used to derive approximations for
larger values of $N$. We note that the procedure of learning the parameters
$\theta$ for a certain truncation parameter $N$ and transferring the learned
parameters to other values of $N$ has a variety of computational benefits~\cite{Vogt:JCP163:15,Saleh:JCTC21:5221}.

The relative error $\frac{|\tilde{E}_m - E^{\text{ref}}_m|}{|E^{\text{ref}}_m|}$ of the approximated energies $\tilde{E}_m$ with respect to the reference energies $E^{\text{ref}}_m$ computed using the analytical formula for the Morse eigenvalues is plotted in \autoref{fig:1} as a function of $N$ for the different bases.
 Across all values of the
truncation parameter \(N\), learning coordinates consistently yields more
accurate results, with significantly improved convergence observed for all
energy levels. Moreover, modelling the coordinate transformation \emph{via} the
nonlinear iResNet further enhances the accuracy. 

To gain a concrete insight into the improvement of convergence enabled by the
use of adaptive coordinate transformations, we estimated the convergence of the
approximations as follows. We assumed that the approximation errors decay as
$\mathcal{O}(N^{-l})$ for some $l > 0$ and estimated $l$ by performing linear
regression on the logarithm of the approximation errors as a function of
$\log(N)$ for each energy level and each basis. On average across the 23 energy
levels, the use of a linear coordinate transformation improves the decay rate of
Hermite approximations by a factor of approximately $\approx 4$. The use of an
iResNet-based coordinate transformation improves the convergence rate by a factor of
approximately $\approx 9$. 
The convergence order estimates for each eigenvalue are provided in
\autoref{tab:convergence_exponents} in the appendix. 

The optimization of coordinate transformations also increases the accuracy of
approximations for small values of $N$. For instance, for $N=23$, the use of a
linear coordinate transformation and an iResNet-based transformation improves
the accuracy of the 23rd eigenvalue by a factor of approximately $6$ and $2
\times 10^3$, respectively,
compared to standard Hermite approximations.

As discussed in \autoref{sec:convergence_iHermite}, the results presented in
\autoref{fig:1} can be explained by the decay behavior of the eigenfunctions of
\eqref{eq:Morse}. Standard Hermite approximations exhibit slow convergence due
to a mismatch between the asymptotic decay of these eigenfunctions and the
Gaussian decay \(\exp(-x^2)\) inherent to the Hermite basis. Our findings
indicate that the coordinate transformations \(g_\text{r}, g_\text{l}\), which define the
adaptive Hermite
bases, enhance convergence by modifying the decay properties of the target
functions. In particular, the transformed eigenfunctions \(W_{g_\text{r}^{-1}}
\Psi_m\) and \(W_{g_\text{l}^{-1}} \Psi_m\) 
display Gaussian-like decay.

To test this hypothesis we first recall
  that the Morse eigenfunctions are given by 
  $$
  \Psi_m = N_m z^{(\lambda-m-1/2)} \exp(-z/2) L_m^{(2\lambda -2m -1)(z)}(z),
  $$
  where $\lambda=\frac{\sqrt{2\mu D_e}}{a_M \hbar}$, $z=2 \ \lambda \exp(-x-x_e)$ and $L_m^\alpha$ is the associated Laguerre
  polynomial. We consider the case \( m = 0 \) and analyze the decay behavior of the eigenfunction \( \Psi_0 \) and its transformed counterpart \( W_{g^{-1}} \Psi_0 \) by examining the moments

\begin{equation}
  \label{eq:decay_Morse}
I^2(s) = \int_\mathbb{R} |x|^s \left(W_{g^{-1}} \Psi_0\right)^2 \, dx,
\end{equation}

for \( g = \mathrm{id} \) (corresponding to the standard Hermite basis), for $g=g_l$ and
for \( g = g_r \).
These are then compared against the same quantity computed for a Gaussian
function

\begin{equation}
\label{eq:decay_G}
  I^2(s) = \int_\mathbb{R} |x|^s \exp(-x^2) \, dx.
\end{equation}
    \autoref{fig:6} displays the computed moments as a function of \(s\). The
    results indicate that \(\Psi_0\) exhibits super-Gaussian decay,
    while \(W_{g_l^{-1}} \Psi_0\) and \(W_{g_r^{-1}} \Psi_0\) exhibit a slower
    decay. Interestingly, \(W_{g_r^{-1}} \Psi_0\) aligns more
    closely with the Gaussian behavior than \(W_{g_l^{-1}} \Psi_0\), which is consistent with the superior performance of the iResNet-based Hermite basis compared to the linear-based basis.
    These findings support our hypothesis regarding the role of coordinate transformations in adapting the decay profile.

\begin{figure}[h]
  \centering
\includegraphics[width=0.45\textwidth]{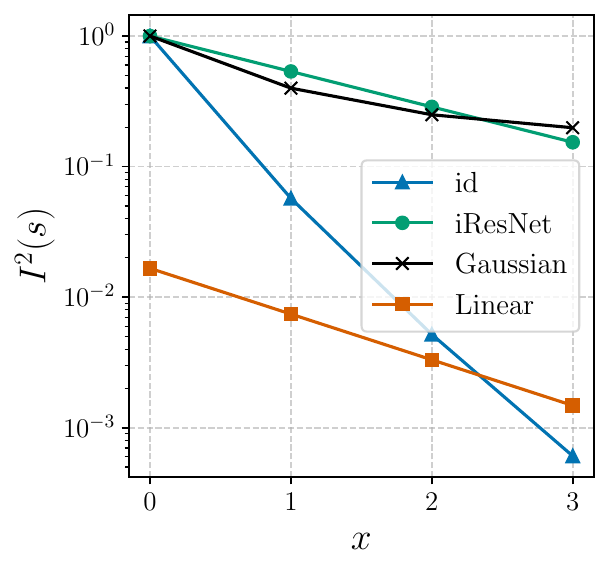}
\caption{Shown are the quantity   \autoref{eq:decay_Morse} as a function of $s$
for $g=\text{id}$ (blue triangles), $g=g_r$ (green
circles), and $g=g_l$ (red squares), and the quantity \autoref{eq:decay_G} (black
crosses).} 
\label{fig:6}
\end{figure}

\section{Conclusion}
Normalizing flows are emerging as a powerful tool for modeling coordinate
transformations. Such transformations were shown to substantially improve the
convergence of Hermite expansions in solving various Schrödinger equations. A
first step towards understanding the underlying mechanisms of this improvement
is to analyze Hermite expansions in adaptive coordinates. 

We established a general principle that allowed us to transfer convergence
theorems of Hermite expansions to approximations by orthonormal bases of the forms
\eqref{eq:flows_ansatz}, see
\autoref{thm:transfer}. This principle
shifts the analysis of approximations in adaptive coordinates to the analysis of
regularity of the pull-back functions $W_{g^{-1}} f$ in the original coordinates.

We then used
this principle to prove the existence of adaptive Hermite bases that yield
improved convergence rates for approximating smooth functions that exhibit an exponential decay along the real axis.
 These results were based on pushing forward the density of the target
function to a Gaussian density. 

Finally, we performed numerical experiments to demonstrate the practical
benefits of using normalizing flows to induce adaptive Hermite bases. We showed
that encoding the insight gained from \autoref{thm:adapt_to_decay} into the loss
function used to optimize the parameters of the coordinate transformations can
significantly improve the convergence rates of Hermite approximations. In particular, we found that the use of a power-law coordinate transformation with parameters selected based on asymptotic considerations can yield super-geometric convergence rates for approximating functions with super-Gaussian decay.

We also optimized coordinate transformations to learn the eigenfunctions of the Morse oscillator. We showed that an iResNet model, designed to
minimize the Rayleigh quotient of the approximations, effectively learns
coordinate transformations that map the eigenfunctions of the Morse oscillator to
functions with Gaussian-like decay, thereby significantly improving the
convergence of Hermite approximations.

In future works, it would be interesting to extend the analysis of
approximations in adaptive coordinates to the multivariate setting, to other
types of orthogonal expansions, and to other functional spaces, such as the
space of continuous functions~\cite{Fernandez:arXiv2512:13740}.

\section*{Code Availability}
The codes we developed to perform the numerical experiments are available at \url{https://github.com/robochimps/num_analysis_flows}.
\printbibliography%
\listofnotes%
\appendix
\section{Appendix}
\subsection{Equivalence principle for Riesz bases}
\label{app:error_comp}

Analogously to \autoref{thm:transfer} we establish a principle that
allows us to transfer convergence results of Hermite expansions to
approximations by Riesz bases of the form \eqref{eq:induced_Riesz}. 

We begin by establishing basic properties of the composition operator $C_g$.
  \begin{theorem}[Basic properties of $C_g$]
    \label{thm:props_Cg}
  Let $g: \Omega \to \mathbb{R}$ be a differentiable map that satisfy the
  hypothesis of \autoref{thm:riesz}. Then, $C_g$
  is invertible and its inverse $C_g^{-1}$ is a composition operator induced
  by the inverse map $g^{-1}$, \ie, $C_g^{-1} = C_{g^{-1}}$.  
\end{theorem}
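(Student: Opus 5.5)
The plan is to show directly that $C_g: L^2(\mathbb{R}) \to L^2(\Omega)$ is a bounded bijection whose two-sided inverse is $C_{g^{-1}}: L^2(\Omega) \to L^2(\mathbb{R})$. The hypotheses of \autoref{thm:riesz} give that $g$ is injective, differentiable, and satisfies $m \le g' \le M$ almost everywhere for some constants $0 < m \le M < \infty$ (recall the standing convention $g' > 0$).

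\textbf{Step 1: $g$ is a bi-Lipschitz bijection onto its image.} Since $g' \ge m > 0$ almost everywhere, $g$ is strictly increasing on each component of $\Omega$. On the image $g(\Omega)$ the inverse $g^{-1}$ is differentiable with $(g^{-1})' = 1/(g' \circ g^{-1})$, and this lies in $[1/M, 1/m]$ almost everywhere. I will also need $g(\Omega) = \mathbb{R}$ up to a null set, so that $C_{g^{-1}}$ is defined on all of $L^2(\mathbb{R})$. I would obtain this from the Riesz basis property in \autoref{thm:riesz}: if $g(\Omega)$ missed a set $E$ of positive measure, then every $f\in L^2(\mathbb{R})$ supported in $E$ would satisfy $C_g f = 0$. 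Completeness of $(C_g h_n)$, together with the Riesz lower bound $\|\sum a_n \phi_n\| \ge c\|a\|$, would then be contradicted, since the coefficient sequence of such an $f$ is nonzero while its image vanishes. Alternatively, one can simply assume surjectivity, as the paper does implicitly when it writes $g: \Omega \to \mathbb{R}$.

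\textbf{Step 2: boundedness of $C_g$ and $C_{g^{-1}}$ via change of variables.} For $f \in L^2(\mathbb{R})$, substituting $y = g(x)$ gives
\[
\|C_g f\|^2_{L^2(\Omega)} = \int_{\mathbb{R}} |f(y)|^2 \, (g^{-1})'(y)\, dy \le \tfrac{1}{m}\|f\|^2_{L^2(\mathbb{R})}.
\]
Symmetrically, for $u \in L^2(\Omega)$,
\[
\|C_{g^{-1}} u\|^2_{L^2(\mathbb{R})} = \int_\Omega |u(x)|^2 g'(x)\,dx \le M\|u\|^2_{L^2(\Omega)}.
\]
Both identities need the change-of-variables formula for monotone, almost-everywhere differentiable maps. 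This is valid provided $g$ is absolutely continuous, which holds because a differentiable function with bounded derivative is Lipschitz.

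\textbf{Step 3: two-sided inverse.} Pointwise, $C_{g^{-1}} C_g f = f \circ g \circ g^{-1} = f$ on $\mathbb{R}$, and $C_g C_{g^{-1}} u = u\circ g^{-1}\circ g = u$ on $\Omega$. Compositions with bi-Lipschitz maps preserve null sets, so these identities are well defined on equivalence classes. Hence $C_g$ is invertible with $C_g^{-1} = C_{g^{-1}}$.

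I expect the main obstacle to be the measure-theoretic justification: checking that composition is well defined on $L^2$ classes (null sets are mapped to null sets by $g$ and $g^{-1}$) and that the change-of-variables formula holds. Both follow from the bi-Lipschitz property established in Step 1, which is why I would establish that first.
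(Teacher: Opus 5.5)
Your proof is correct, but it takes a different route from the paper's. The paper proves the statement in one line by citing a general fact about composition operators (Corollary 2.2.3 in Singh's monograph): the inverse of an invertible composition operator is the composition operator induced by the inverse map. You instead prove everything directly for this particular $g$. You establish that $g$ is bi-Lipschitz, bound $C_g$ and $C_{g^{-1}}$ via the substitution $y=g(x)$, check that null sets are preserved so that composition is well defined on $L^2$ classes, and verify the pointwise identities $C_{g^{-1}}C_g=\mathrm{id}$ and $C_gC_{g^{-1}}=\mathrm{id}$. The citation buys brevity and places the result in the abstract composition-operator framework. It leaves implicit, however, both the check of that result's hypotheses (null sets preserved by $g$ and $g^{-1}$, boundedness, surjectivity) and the adaptation to an operator acting between two different spaces, $L^2(\mathbb{R})\to L^2(\Omega)$. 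Your version makes all of this explicit. It also yields the quantitative bounds $\|C_g\|\le m^{-1/2}$ and $\|C_{g^{-1}}\|\le M^{1/2}$; the first is exactly the constant $k^{-1/2}$ that the paper merely asserts at the end of the proof of \autoref{thm:transfer_Riesz}. One nuance in Step 1 concerns surjectivity. Deducing $|\mathbb{R}\setminus g(\Omega)|=0$ from the Riesz property is formally legitimate within the paper, and only the lower Riesz bound is needed there, not completeness. But the sufficiency direction of \autoref{thm:riesz} itself fails without surjectivity. For $\Omega=(0,\infty)$ and $g=\mathrm{id}$, the image is a half-line and $C_g$ annihilates every function supported on $(-\infty,0)$. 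So your second option is the sound one: treat surjectivity of $g:\Omega\to\mathbb{R}$ (up to a null set) as part of the hypothesis.
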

\begin{proof}
The result then follows from the fact that $C_g$ is a composition operator and that the inverse of a composition operator is also a composition operator induced by the inverse map; see, e.g., \textcite[Corollary 2.2.3]{Singh:CompOp1993}.
\end{proof}

We consider only Riesz bases of the form
  \eqref{eq:induced_Riesz} induced by bi-Lipschitz maps \(g\), \ie,
  maps for which \(g'\) is bounded away from zero and infinity almost
  everywhere. We denote the lower and upper Lipschitz constants by $k, K$, respectively.

We define $P_{N,C}$, the projection
operator associated with the Riesz basis, as
\begin{equation*}
  P_{N, C}: L^2(\Omega) \to \operatorname{span}(C_g h_0, \dots, C_gh_{N-1}), \quad  P_{N, C}: f \mapsto \sum_{n=0}^{N-1} \langle f, (C_g h_n) \ g' \rangle_\Omega \ C_g h_n.
\end{equation*}

\begin{theorem}[Equivalence principle for Riesz bases]
\label{thm:transfer_Riesz}
For any $f \in L^2(\Omega)$, 
\[
\|f-P_{N,C}f\|_{L^2(\Omega)}
\leq
k^{-1/2} \ \|C_{g^{-1}}f - P_N(C_{g^{-1}}f)\|_{L^2(\mathbb{R})}.
\]
\end{theorem}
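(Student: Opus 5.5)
The plan mirrors the proof of \autoref{thm:transfer}: conjugate the projection $P_{N,C}$ by the composition operator $C_g$, so that it becomes the Hermite projection $P_N$. The only new ingredient is that $C_g$ is no longer unitary, so we control its norm using the lower Lipschitz constant $k$.

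\textbf{Step 1 (conjugation identity).} For $f\in L^2(\Omega)$, set $u:=C_{g^{-1}}f = f\circ g^{-1}$. By \autoref{thm:props_Cg}, $f = C_g u$. We compute the coefficients appearing in $P_{N,C}$ by the substitution $y=g(x)$, using $g'>0$:
\[
\langle f, (C_g h_n)\, g'\rangle_\Omega=\int_\Omega f(x)\,h_n(g(x))\,g'(x)\,dx=\int_{\mathbb{R}} u(y)\,h_n(y)\,dy=\langle u,h_n\rangle_{\mathbb{R}}.
\]
It follows that
\[
P_{N,C}f=\sum_{n<N}\langle u,h_n\rangle_{\mathbb{R}}\,C_g h_n = C_g P_N u,
\]
and therefore
\[
f-P_{N,C}f = C_g\bigl(u-P_N u\bigr).
\]
Here we only use the linearity of $C_g$ and the fact that it commutes with finite sums.

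\textbf{Step 2 (norm bound for $C_g$).} For any $v\in L^2(\mathbb{R})$ we change variables $y=g(x)$, $dx = dy/g'(g^{-1}(y))$:
\[
\|C_g v\|_{L^2(\Omega)}^2=\int_\Omega |v(g(x))|^2\,dx=\int_{\mathbb{R}}|v(y)|^2\,\frac{dy}{g'(g^{-1}(y))}\le k^{-1}\|v\|_{L^2(\mathbb{R})}^2 .
\]
The inequality uses $g'\ge k$ almost everywhere. The image of $g$ is all of $\mathbb{R}$ because $g$ is a bijection onto $\mathbb{R}$, as implicit in \autoref{thm:props_Cg}. This gives $\|C_g\|\le k^{-1/2}$.

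\textbf{Step 3 (conclusion).} Applying Step 2 with $v=u-P_Nu$ in the identity from Step 1 yields the claimed estimate with constant $k^{-1/2}$.

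\textbf{Main obstacle.} The only delicate point is justifying the change of variables. It requires $g$ to be a bijection $\Omega\to\mathbb{R}$ that is absolutely continuous with $g'>0$ almost everywhere, so that $g^{-1}$ is also absolutely continuous and the substitution formula holds for $L^1$ integrands. The bi-Lipschitz assumption guarantees this. We would also note that $u\in L^2(\mathbb{R})$ by the same computation using the upper constant $K$: $\|u\|^2\le K\|f\|^2$. This ensures $P_Nu$ is well defined.
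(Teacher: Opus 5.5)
Your proposal is correct and follows the paper's proof essentially step for step. Both arguments use the coefficient identity $\langle f,(C_g h_n)\,g'\rangle_\Omega=\langle C_{g^{-1}}f,h_n\rangle_{\mathbb{R}}$ to write $f-P_{N,C}f=C_g\bigl(u-P_Nu\bigr)$ and then bound by $\|C_g\|$. The one difference is that you verify $\|C_g\|\le k^{-1/2}$ by the change of variables, whereas the paper simply asserts $\|C_g\|=k^{-1/2}$; the upper bound is all the estimate needs.
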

\begin{proof}
  Since $C_g$ is invertible with inverse $C_{g^{-1}}$,
\[
f - P_{N,C} f
=
C_g\!\left(
C_{g^{-1}} f
-
\sum_{n=0}^{N-1}
\langle f, (C_g h_n)\, g' \rangle_\Omega \, h_n
\right).
\]
The coefficients satisfy
\[
\langle f, (C_g h_n)\, g' \rangle_\Omega
=
\langle C_{g^{-1}} f, h_n \rangle_{\mathbb{R}}.
\]
Setting $\tilde f := C_{g^{-1}} f$, we obtain
\[
f - P_{N,C} f
=
C_g\!\left(
\tilde f
-
\sum_{n=0}^{N-1}
\langle \tilde f, h_n \rangle_{\mathbb{R}} \, h_n
\right).
\]
The proof is completed by noting that $\|C_g\| = k^{-1/2}$.
\end{proof}

\begin{remark}
Note that one can exchange the role of the Riesz basis \eqref{eq:induced_Riesz}
and its dual, \ie, one can consider approximations of the form 
$$
\tilde{P}_{N,C} f := \sum_{n=0}^{N-1} \langle f, C_g h_n \rangle_\Omega \ (C_g h_n) \ g'.  
$$
The equivalence principle in such a case takes the form
\begin{equation}
\|f - \tilde{P}_{N,C} f\|_{L^2(\Omega)} \leq  \|C_{g^{-1}} (f/g') - P_N(C_{g^{-1}}(f/g'))\|_{L^2(\mathbb{R})}.
	\end{equation}
\end{remark}
We note that establishing a corollary similar to \autoref{cor:adapt_to_decay} is
not straightforward in the case of Riesz bases. The
difficulty arises from the fact that the coordinate transformation $g$
constructed in the proof of \autoref{thm:adapt_to_decay} is not guaranteed to be bi-Lipschitz, and thus does not induce a Riesz basis.

\subsection{Computational details}
\label{subsec:comp_details}
In all experiments pertaining to interpolation, we used an iResNet architecture composed of 10 blocks, each
consisting of a fully connected feedforward network with 2 hidden layers of 8
neurons and one output neuron. We employed the lipswish activation function.

To solve the optimization problem \eqref{Eq:opt_problem} we used the same 10000 uniformly spaced sample points used to construct the
target functions. We then applied the Adam optimizer with a learning rate of
$10^{-3}$ for 20000 iterations for $i=1$ for the iResNet-based model. For $i=2$,
we used a learning rate of $3 \times 10^{-2}$ for 40000 iterations for the
iResNet-based model.
For the linear
model, we used the Adam optimizer with a learning rate of $10^{-2}$ for 1000
iterations for $i=1$, and a learning rate of $10^{-3}$ for 10000 iterations for
$i=2$. For the power-law model, we used the Adam optimizer with a learning rate
of $10^{-3}$ for 30000 iterations for $i=1$, and a learning rate of $3 \times
10^{-6}$ for 10000 iterations for $i=2$. For all optimizations, we employed the cosine annealing learning rate schedule for both target functions.

For optimizing the iResNet model for the Schrödinger equation, we used the same
architecture described in \textcite{Vogt:JCP163:15} and the Adam optimizer with
a learning rate of $10^{-3}$ for 20000 iterations. For the training we used Hermite
quadrature with $100$ quadrature points. For optimizing the linear model, we used the Adam optimizer with a learning rate of $10^{-3}$ for 10000 iterations.

\begin{table}[H]
\centering
\begin{tabular}{c|ccc}
\hline
Eigenvalue & Hermite & Linear & iResNet \\
\hline
0 & 1.506 & \textbf{21.575} & 6.434 \\
1 & 13.099 & \textbf{20.398} & 12.932 \\
2 & 16.612 & 18.845 & \textbf{19.632} \\
3 & 14.676 & 16.970 & \textbf{25.519} \\
4 & 12.325 & 17.015 & \textbf{29.531} \\
5 & 9.632 & 17.776 & \textbf{31.434} \\
6 & 6.920 & 17.050 & \textbf{31.166} \\
7 & 4.794 & 15.533 & \textbf{29.879} \\
8 & 3.441 & 14.087 & \textbf{28.437} \\
9 & 2.640 & 12.631 & \textbf{27.279} \\
10 & 2.150 & 11.504 & \textbf{26.564} \\
11 & 1.831 & 10.936 & \textbf{25.923} \\
12 & 1.612 & 10.760 & \textbf{25.038} \\
13 & 1.455 & 10.491 & \textbf{24.012} \\
14 & 1.337 & 10.232 & \textbf{22.833} \\
15 & 1.251 & 8.379 & \textbf{21.260} \\
16 & 1.184 & 6.023 & \textbf{19.232} \\
17 & 1.129 & 4.178 & \textbf{16.491} \\
18 & 1.113 & 3.900 & \textbf{13.035} \\
19 & 1.065 & 2.915 & \textbf{10.570} \\
20 & 1.029 & 2.962 & \textbf{10.801} \\
21 & 1.275 & 2.573 & \textbf{11.078} \\
22 & 2.014 & 2.468 & \textbf{2.529} \\
\hline
\end{tabular}
\caption{Convergence exponents for the first 23 eigenvalues of the Morse oscillator \eqref{eq:Morse} using Hermite functions, Hermite functions with a linear coordinate transformation, and Hermite functions with an iResNet-based transformation.}
\label{tab:convergence_exponents}
\end{table}

\end{document}

%% Local Variables:
%% coding: utf-8
%% mode: LaTeX
%% mode: auto-fill
%% mode: flyspell
%% fill-column: 100
%% ispell-dictionary: "american"
%% reftex-cite-format: default
%% TeX-auto-save: t
%% TeX-close-quote: "''"
%% TeX-open-quote: "``"
%% TeX-parse-self: t
%% truncate-lines: t
%% End: